\documentclass[11pt,reqno]{amsart}
\usepackage{graphicx}
\usepackage{float}
\usepackage[caption = false]{subfig}
\usepackage{enumerate}
\usepackage{mathtools}
\usepackage{breqn}
\usepackage{array, geometry, graphicx}
\usepackage{amsmath,amsfonts,paralist,amssymb,amsthm,mathrsfs}
\usepackage{pdfpages}
%\usepackage{setspace}
%\doublespacing
\textwidth=465pt \evensidemargin=0pt \oddsidemargin=0pt
\marginparsep=8pt \marginparpush=8pt \textheight=650pt
\topmargin=-25pt

\setlength{\parskip}{2pt}

\newtheorem{example}{Example}
\newtheorem{theorem}{Theorem}[section]
\newtheorem{corollary}[theorem]{Corollary}
\newtheorem{lemma}[theorem]{Lemma}

\theoremstyle{definition}
\newtheorem{definition}[theorem]{Definition}
\theoremstyle{remark}
\newtheorem{remark}[theorem]{Remark}
\numberwithin{equation}{section}

\DeclareMathOperator{\sech}{sech}

\begin{document}
	%\fontsize{14pt}{16pt}\selectfont
	
\title[Hyperbolic Cosine function]{On a subfamily of starlike functions related to Hyperbolic Cosine function}
	\thanks{The first author is supported by Delhi Technological University, New Delhi}
	\author[Mridula Mundalia]{Mridula Mundalia}
	\address{Department of Applied Mathematics, Delhi Technological University, Delhi--110042, India}
	\email{mridulamundalia@yahoo.co.in}
	\author{S. Sivaprasad Kumar}
	\address{Department of Applied Mathematics, Delhi Technological University, Delhi--110042, India}
	\email{spkumar@dce.ac.in}

	\subjclass[2010]{30C45, 30C80}
	
	\keywords{Univalent functions, Starlike functions, Radius problems, Hyperbolic Cosine function, Subordination}
	%\let\thefootnote\relax\footnotetext{%*Corresponding author}
	%\thanks{The work presented here was supported by a Research Fellowship
	%from Delhi Technological University, New Delhi.}
\begin{abstract}
We introduce and study a new Ma-Minda subclass of starlike functions $\mathcal{S}^*_{\varrho},$ defined as
\[\mathcal{S}^{*}_{\varrho}:=\left\{f\in\mathcal{A}:\frac{zf'(z)}{f(z)} \prec \cosh \sqrt{z}=:\varrho(z), z\in\mathbb{D} \right\},\] associated with an analytic univalent function $\cosh \sqrt{z},$ where we choose the branch of the square root function so that $\cosh\sqrt{z}=1+z/2!+z^{2}/{4!}+\cdots.$ We establish certain inclusion relations for $\mathcal{S}^{*}_{\varrho}$ and deduce sharp $\mathcal{S}^{*}_{\varrho}-$radii for certain subclasses of analytic functions.
\end{abstract}
	
	\maketitle
	
	\section{Introduction}

 Let $\mathcal{A}_{n}$ be the class of all analytic functions defined on the open unit disc $\mathbb{D}:=\left\{z\in\mathbb{C}:|z|<1\right\},$ with Taylor series representation of the form $f(z)=z+a_{n+1}z^{n+1}+a_{n+2}z^{n+2}+\cdots.$ Let $\mathcal{A} := \mathcal{A}_{1}.$ Assume $\mathcal{S}\subset \mathcal{A}$ as the class of univalent functions.
If $f(z)$ and $g(z)$ are analytic functions in $\mathbb{D},$ then $f(z)$ is said to be subordinate to $g(z)$ $(f \prec g),$  if there exists a self-map $w(z)$ on $\mathbb{D}$  such that $w(0)=0$ and $f(z)=g(w(z)).$ For instance,  if $g(z)$ is a univalent function in $\mathbb{D}$, then $f\prec g$ if and only if $f(0)=g(0)$ and $f(\mathbb{D})\subset g(\mathbb{D}).$
In 1992, Ma and Minda \cite{Ma & Minda} investigated the following  subclasses of $\mathcal{A}$ using the notion of subordination
 \[\mathcal{S}^{*}(\phi)=\left\{f\in\mathcal{A}:\frac{zf'(z)}{f(z)}\prec \phi(z), z\in\mathbb{D}\right\}\]
and \[\mathcal{C}(\phi):=\left\{f\in\mathcal{A}:1+\frac{zf''(z)}{f'(z)} \prec \phi(z), z\in\mathbb{D}\right\}.\]
In the above defined classes, the expressions $zf'(z)/f(z)$ and $1+z''(z)/f'(z)$ are subordinate to an analytic univalent function $\phi(z)$ such that $\phi'(0)>0$ and $\operatorname{Re} \phi(z)>0$ $(z\in\mathbb{D}).$ Furthermore, $\phi(z)$ is symmetric about the real axis and starlike with respect to $\phi(0)=1$. Several authors have previously handled the Ma and Minda classes for various choices of $\phi(z),$ some are enlisted below  %see \cite{Cho & Virender(2019), Ebadian(2020), Gangania & kumar(2021), Kumar & Kamaljeet (2021), Mundalia & Sivaprasad(2020), Ravichandran & Sivaparasad(2005), kumar & Ravichandran(2013)}. 

\begin{table}[ht]
\renewcommand{\arraystretch}{1.17}
  \caption{Ma-Minda starlike classes for special choices of $\phi(z)$} % title of Table
  \centering % used for centering table
  \begin{tabular}{llll} % centered columns (4 columns) 
  \hline %inserts double horizontal lines 
    {\bf{Class $\mathcal{S}^{*}(\phi)$}}  & \textbf{$\phi(z)$}  & \textbf{References}     \\ [0.9ex] % inserts table
    %heading
    \hline  % inserts single horizontal line
           $\mathcal{S}^{*}_{e}$       &   $e^{z}$ &   \cite{Mendiratta n Nagpal} Mendiratta et al.  \\ 
     $\mathcal{S}^{*}_{L}$    &   $\sqrt{1+z}$   &  \cite{Sokol n Stankiewicz} Sok\'{o}\l \ et al.  \\ 
       $\mathcal{S}^*_{q}$                &  $z+\sqrt{1+z^2}$  &    \cite{Raina} Raina et al. \\  
       $\mathcal{S}^{*}_{s}$       &   $(1+sz)^{2},$ $-1 \leq s \leq 1$ &    \cite{Masih n Kanas} Masih et al.  \\ 
$\mathcal{S}^{*}(q_{\kappa})$    &    $\sqrt{1+\kappa z},$ $0<\kappa \leq 1$  &  \cite{Aouf n Sokol} Sok\'{o}\l \ et al. \\ 
             $\mathcal{SS}^{*}(\beta)$       &   $((1+z)/(1-z))^{\beta},$ $0<\beta\leq 1$   &    \cite{Stankiewicz} Stankiewicz  \\ 
             $\mathcal{S}^{*}[A,B]$   &  $(1+Az)/(1+Bz)$   &\cite{Janowski}  W. Janowski \\ 
    $\mathcal{S}^{*}(\beta)$     &   $(1+(1-2\beta)z)/(1-z),$ $0\leq\beta< 1$  &   \cite{Robertson(1936)} Robertson \\  
        \hline %inserts single line 
\end{tabular}
\label{EQNTable2}
\end{table}
\noindent The classes $\mathcal{S}^{*}_{e},\mathcal{S}^{*}_{s},$ $\mathcal{S}^{*}(q_{\kappa}),$ $\mathcal{S}^{*}[A,B]$ and $\mathcal{SS}^{*}(\beta)$  were widely studied in \cite{Aouf n Sokol, Janowski, Masih n Kanas, Mendiratta n Nagpal, Hussain & Darus}. For instance, a number of sufficient conditions in terms of coefficient estimates for the class $\mathcal{SS}^{*}(\beta)$ are studied in \cite{Nezhmetdinov & Ponnusamy(2005)} and references therein.

For the present study we examine the function $\varrho_{\sigma}(z):= \cosh \sigma \sqrt{z},$ where $\sigma\in[-\pi/2,\pi/2]-\{0\}$ and we choose the branch of the square root function so that $\cosh\sigma\sqrt{z}=1+\sigma^{2}z/2!+\sigma^{4}z^{2}/{4!}+\cdots.$ Note that $\varrho_{\sigma}(z)$ is an analytic univalent function with $\operatorname{Re}\varrho_{\sigma}(z)>0$ and maps $\mathbb{D}$ onto a convex region. Further it is symmetric about real axis (i.e $\overline{\varrho_{\sigma}(z)}=\varrho_{\sigma}(\overline{z})$) such that $\varrho_{\sigma}'(0)=\sigma^{2}/2>0.$ Consequently,  $\varrho_{\sigma}(z)$ is a Ma-Minda type function. In the recent years, cosine and cosine hyperbolic functions have been investigated, see \cite{Hussain,Raza}. Note that $\varrho(z)=\cosh \sqrt{z},$ $\phi_{1}(z)=\cos z$ and $\phi_{2}(z)=\cosh z$ have identical images, however $\phi_{1}(z)$ $(\phi_{1}'(0)<0)$ and $\phi_{2}(z)$ are non-univalent functions in $\mathbb{D},$ whereas $\varrho(z)$ $(\varrho'(0)=1/2>0)$ is univalent in $\mathbb{D}.$  %therefore in this paper we have made an attempt to discover several properties and results in this direction.
%As a result, the geometry of $varrho a(z)$ aroused our curiosity in using subordination to formulate the following definition. 
Thus the geometry of $\varrho_{\sigma}(z)$ piqued our interest in formulating the following definition, by means of subordination.

\begin{definition}
Let  $\mathcal{S}^{*}_{\varrho_{\sigma}}$ be the class of normalized starlike functions, defined as follows:
\[\mathcal{S}^{*}_{\varrho_{\sigma}}:=\left\{f\in\mathcal{A}:\frac{zf'(z)}{f(z)} \prec \varrho_{\sigma}(z):=\cosh \sigma \sqrt{z}, z\in\mathbb{D}\right\} \quad (\sigma\in[-\pi/2,\pi/2]-\{0\}),\]
where we choose the branch of the square root function so that \[\cosh\sigma\sqrt{z}=1+\frac{\sigma ^2 z}{2!}+\frac{\sigma ^4 z^2}{4!}+\frac{\sigma ^6 z^3}{6!}+\cdots.\]
\end{definition} 
The conformal mapping $\varrho_{\sigma}:\mathbb{D}\rightarrow \mathbb{C},$ maps the unit disc $\mathbb{D}$ onto the region \[\Omega_{\varrho_{\sigma}}:=\{u\in\mathbb{C}:|\log(u+\sqrt{u^{2}-1})|^{2}<\sigma^{2}\}  \quad (\sigma\in[-\pi/2,\pi/2]-\{0\}),\] defined on the principle branch of logarithm and square root functions. For each $\sigma\leq \hat{\sigma},$ observe that $\varrho_{\sigma}(\mathbb{D}) \subset \varrho_{\hat{\sigma}}(\mathbb{D}).$
Moreover, for each circle $|z|=r<1,$ 
\begin{align} \label{e21}
 \left\{
    \begin{array}{ll}
         \displaystyle{\min_{|z|=r}{\operatorname{Re}}\varrho_{\sigma}(z)}=\min_{|z|=r}|\varrho_{\sigma}(z)|=\varrho_{\sigma}(\sqrt{-r})  \\ 
        \displaystyle {\max_{|z|=r} {\operatorname{Re}}\varrho_{\sigma}(z)}=\max_{|z|=r} |\varrho_{\sigma}(z)|=\varrho_{\sigma}(\sqrt{r}).
    \end{array}
\right. \end{align}
Assume $\varrho_{1}(z)=:\varrho(z),$ therefore we have $\mathcal{S}^{*}_{\varrho}=\mathcal{S}^{*}_{\varrho_{1}}.$ In the  present investigation we shall restrict our major workings to a subclass of starlike functions, namely $\mathcal{S}^{*}_{\varrho},$ and deduce radii constants along with some inclusion relations.
% Further  as per the definition of class $\mathcal{S}^{*}_{\varrho_{a}},$ for each  $f\in\mathcal{S}^{*}_{\varrho_{a}},$ we can equivalently say that %\[\left|\log \left(\frac{zf'(z)}{f(z)}+\sqrt{\left(\frac{zf'(z)}{f(z)}\right)^{2}-1}\right)\right|^{2}<a^{2}  \quad (z\in \mathbb{D}).\] 
In terms of integral representation, we have $f\in\mathcal{S}^{*}_{\varrho}$ if and only if 
\begin{equation}\label{e20}
f(z)=z \exp\left(\int_{0}^{z} \frac{\hat{\varrho}(t)-1}{t} dt\right)
\end{equation}
where $\hat{\varrho}(z) \prec \varrho(z).$ Note that if $\psi_{\hat{ \varrho}}(z)=1+z/3+z^{2}/18$ and  $\phi_{\hat{ \varrho}}(z)=1+\sin\left(z/3\right),$ then evidently $\psi_{\hat{ \varrho}}(z)$ and $\phi_{\hat{ \varrho}}(z)$ are subordinate to $\varrho(z),$ so the corresponding functions  
 \[f_{1}(z)=z \exp{\left({\frac{z}{3}+\frac{z^{2}}{36}}\right)} \quad \text{and} \quad f_{2}(z)=z e^{Si(z)}, \text{where } Si(z)=\int_{0}^{z}\frac{\sin t}{t} dt\] lie in $\mathcal{S}^{*}_{\varrho}.$ Now using the representation in \eqref{e20}, we obtain  different functions, those work as extremal functions for various results. For instance, $\varphi_{\varrho _{n}}\in \mathcal{A}$ $(n=2,3,4,\ldots),$ defined as 
\begin{align}\label{e22}
\varphi_{\varrho _{n}}(z)=z \exp\left(\int_{0}^{z} \frac{\varrho(t^{n-1})-1}{t} dt\right)=z+\frac{z^n}{2 (n-1)}+\frac{z^{2 n-1}}{48 (n-1)}+\cdots, \end{align} belongs to $\mathcal{S}^{*}_{\varrho}.$ We denote $\varphi_{\varrho}:=\varphi_{\varrho_{2}}.$ For completeness of our class $\mathcal{S}^{*}_{\varrho},$ we give below a remark using the results of \cite{Ma & Minda, Mundalia & Sivaprasad(2020)}.  
  \begin{remark}
   	 For  $f\in\mathcal{S}^{*}_{\varrho}$ and $\varphi_{\varrho}(z)$ be as defined in \eqref{e22}, then for $|z|=r_{0}<1,$ we have 
   	 \begin{enumerate}[(i)]
   	 	\item  $-\varphi_{\varrho}(-r_{0})\leq |f(z)| \leq \varphi_{\varrho}(r_{0})$ (Growth Theorem).
   	 	\item   $\varphi_{\varrho}'(-r_{0}) \leq |f'(z)| \leq \varphi_{\varrho}'(r_{0})$ (Distortion Theorem).
   	 	\item   $|\arg(f(z)/z)|\leq \displaystyle{\max_{|z|=r_{0}}} \arg \left(\varphi_{\varrho}(z)/z\right)$ (Rotation Theorem) .\end{enumerate}
 	Equality for (i)-(iii) holds for some $z_{0}\neq 0$ if and only if $f(z)$ is a rotation of $\varphi_{\varrho}(z).$  Infact if $f\in\mathcal{S}^{*}_{\varrho}$ then either $f$ is a rotation of $\varphi_{\varrho}(z)$ or $f(\mathbb{D})\supset\{v:|v|\leq -\varphi_{\varrho}(-1) \approx 0.619\ldots \}.$
 \end{remark}
Further, from the results in \cite{Mundalia & Sivaprasad(2020)} for each $f\in\mathcal{S}^{*}_{\varrho},$  
   $(i)$ $|a_{2}| \leq 1/2,$ $(ii)$ $ |a_{3}| \leq 1/4,$ $(iii)$ $|a_{4}|\leq 1/6$ and $(iv)$ for any complex constant $\mu,$ $|a_{3}-\mu a_{2}^{2}|\leq \frac{1}{4}\max\{1,|\mu - 7/12|\}.$ These estimates are sharp.
     Equality in $(i)$ holds for the function $\varphi_{\varrho}(z)$ and  $\tilde{f}(z)=z+z^{3}/4$ is an extremal function for $(ii)$ and $(iv).$

\section{Properties of Hyperbolic Cosine function}

We begin with a Lemma which demonstrates a maximal disc  centered at a point $(c,0)$ on the real line, that can be subscribed within $\varrho_{\sigma}{(\mathbb{D})}.$ 

\begin{lemma}\label{l5}
Suppose $\sigma \neq 0,$ then $\varrho_{\sigma}(z)$    satisfies the following inclusion 
\[\left\{u\in \mathbb{C}:|u-c|<r_{\sigma c}\right\}\subset \varrho_{\sigma}(\mathbb{D})=:\Omega_{\varrho_{\sigma}} \quad (-\pi/2 \leq \sigma \leq \pi/2),\]
where 
\begin{align*}
r_{\sigma c}=\left\{\begin{array}{cl}
  c - \cos \sigma, & \cos \sigma < c \leq (\cosh \sigma + \cos \sigma)/2\\
  \cosh \sigma - c, & (\cosh \sigma + \cos \sigma)/2 \leq c < \cosh \sigma.
\end{array}\right.
\end{align*}
\end{lemma}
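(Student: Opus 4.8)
The plan is to identify $r_{\sigma c}$ with the Euclidean distance from $c$ to $\partial\Omega_{\varrho_{\sigma}}$, so that the asserted inclusion is automatic. Since $\varrho_\sigma$ is continuous on $\overline{\mathbb D}$ with $\varrho_\sigma(\overline{\mathbb D})=\overline{\Omega_{\varrho_\sigma}}$, one has $\partial\Omega_{\varrho_\sigma}\subseteq\varrho_\sigma(\partial\mathbb D)$; and since $c\in(\cos\sigma,\cosh\sigma)=\varrho_\sigma\bigl((-1,1)\bigr)\subset\Omega_{\varrho_\sigma}$ while $\Omega_{\varrho_\sigma}$ is bounded, the largest disc centred at $c$ lying inside $\Omega_{\varrho_\sigma}$ has radius $\operatorname{dist}(c,\mathbb C\setminus\Omega_{\varrho_\sigma})=\operatorname{dist}(c,\partial\Omega_{\varrho_\sigma})=\min_{|z|=1}|\varrho_\sigma(z)-c|$. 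As $\cosh$ is even, $\varrho_\sigma=\varrho_{-\sigma}$ and the claimed value depends only on $\cos\sigma,\cosh\sigma$, so I assume $\sigma\in(0,\pi/2]$. Putting $z=e^{i\theta}$, $t=\theta/2$, $p=\sigma\cos t$, $q=\sigma\sin t$, one has $u:=\varrho_\sigma(e^{i\theta})=\cosh(\sigma e^{it})=\cosh p\cos q+i\sinh p\sin q$, hence $X:=\operatorname{Re}u=\cosh p\cos q$ and $|u|^{2}=\tfrac12(\cosh 2p+\cos 2q)$; by the symmetry $\overline{\varrho_\sigma(e^{i\theta})}=\varrho_\sigma(e^{-i\theta})$ it suffices to minimise $g(t):=|u-c|^{2}=|u|^{2}-2cX+c^{2}$ over $t\in[0,\pi/2]$, with $g(0)=(\cosh\sigma-c)^{2}$ (the value at $z=1$) and $g(\pi/2)=(c-\cos\sigma)^{2}$ (at $z=-1$).

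A direct differentiation gives $g'(t)=2X'(t)\bigl(R(t)-c\bigr)$ with $X'(t)=-\bigl(q\sinh p\cos q+p\cosh p\sin q\bigr)<0$ on $(0,\pi/2)$ and
\[R(t):=\frac{(|u|^{2})'(t)}{2X'(t)}=\frac{q\sinh p\cosh p+p\sin q\cos q}{q\sinh p\cos q+p\cosh p\sin q},\qquad p=\sigma\cos t,\ \ q=\sigma\sin t,\]
so that $\operatorname{sgn}g'(t)=\operatorname{sgn}\bigl(c-R(t)\bigr)$ on $(0,\pi/2)$. The heart of the proof — and the step I expect to be the main obstacle — is to show that $R$ is \emph{strictly increasing} on $(0,\pi/2)$ (equivalently, that $|u|^{2}$ is a concave function of $X=\operatorname{Re}u$ along the boundary arc). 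Granting this, $c-R(t)$ is strictly decreasing on $(0,\pi/2)$, so $g'$ changes sign at most once there and only from $+$ to $-$; hence on $[0,\pi/2]$ the function $g$ is either monotone or first increasing and then decreasing, and in every case
\[\min_{t\in[0,\pi/2]}g(t)=\min\bigl\{g(0),g(\pi/2)\bigr\}=\min\bigl\{(\cosh\sigma-c)^{2},(c-\cos\sigma)^{2}\bigr\}.\]
Taking square roots, $r_{\sigma c}=\min\{\cosh\sigma-c,\ c-\cos\sigma\}$, which is $c-\cos\sigma$ when $\cos\sigma<c\le(\cosh\sigma+\cos\sigma)/2$ and $\cosh\sigma-c$ when $(\cosh\sigma+\cos\sigma)/2\le c<\cosh\sigma$, exactly as asserted.

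It remains to prove $R'(t)>0$ on $(0,\pi/2)$. Both the numerator $N(t)=q\sinh p\cosh p+p\sin q\cos q$ and the denominator $D(t)=q\sinh p\cos q+p\cosh p\sin q$ of $R$ are smooth and positive on $(0,\pi/2)$, so $R'$ has the sign of $N'D-ND'$; I would expand this using $p'=-q$, $q'=p$ and reduce it to a one-variable inequality in $p,q,\sinh p,\cosh p,\sin q,\cos q$ on the arc $p^{2}+q^{2}=\sigma^{2}$, to be settled with the elementary estimates $\tanh p<p<\sinh p$ and $q<\tan q$ for $0<q<\pi/2$, together with the convexity of $\Omega_{\varrho_\sigma}$ already established in the paper. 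A slightly different packaging of the same difficulty is available: using that $g(t)-(c-\cos\sigma)^{2}$ and $g(t)-(\cosh\sigma-c)^{2}$ are affine and monotone in $c$, one reduces the lower bound to the single inequality $(\operatorname{Re}u-\cos\sigma)(\cosh\sigma-\operatorname{Re}u)\le(\operatorname{Im}u)^{2}$ on $\partial\Omega_{\varrho_\sigma}$, i.e.\ to the assertion that $\partial\Omega_{\varrho_\sigma}$ lies outside the circle with diameter $[\cos\sigma,\cosh\sigma]$, which may be proved directly instead; either way the entire lemma rests on one explicit single-variable estimate, everything else being routine.
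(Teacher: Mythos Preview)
Your route is genuinely different from the paper's and, in structure, cleaner. The paper argues by a case analysis on $c$: for each of three ranges it asserts (partly on the strength of Figure~\ref{fig1}) either that $G_c$ is monotone on $[0,\pi/2]$ or that it has exactly one interior critical point $\tau_{\tilde c}$, and then simply compares the endpoint values $G_c(0)$ and $G_c(\pi/2)$; in Cases~2 and~3 it tacitly assumes that this interior critical point is a local \emph{maximum} of $G_c$, which is precisely what is needed and is never justified. Your factorisation $g'(t)=2X'(t)\bigl(R(t)-c\bigr)$ with $X'<0$ on $(0,\pi/2)$ isolates exactly this issue: the interior critical point is a local maximum for every admissible $c$ if and only if $R$ is increasing, so your single $c$-free claim $R'>0$ packages everything the paper spreads over several sub-cases. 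Your alternative reformulation as the circle inequality $(\operatorname{Re}u-\cos\sigma)(\cosh\sigma-\operatorname{Re}u)\le(\operatorname{Im}u)^{2}$ on $\partial\Omega_{\varrho_\sigma}$ is correct and shows in passing that the present lemma and Remark~\ref{l12} (the maximal inscribed disc has diameter $[\cos\sigma,\cosh\sigma]$) are in fact equivalent, not merely that one implies the other.

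The residual gap is the same in both arguments: neither you nor the paper actually proves the key estimate. You flag it honestly as ``the main obstacle''; the paper does not flag it at all. However, your proposed toolkit (``$\tanh p<p<\sinh p$, $q<\tan q$, together with convexity of $\Omega_{\varrho_\sigma}$'') is unlikely to suffice as stated: numerically the circle inequality is very tight (for $\sigma=1$ and $t=\pi/4$ the two sides differ by under $2\%$), so crude monotone bounds will not close it, and convexity of the image by itself says nothing about the boundary lying outside a particular disc. If you want to complete the argument, the circle form $|u|^{2}-(\cos\sigma+\cosh\sigma)\operatorname{Re}u+\cos\sigma\cosh\sigma\ge 0$, i.e.\ $\tfrac12(\cosh 2p+\cos 2q)-(\cos\sigma+\cosh\sigma)\cosh p\cos q+\cos\sigma\cosh\sigma\ge 0$ on the arc $p^{2}+q^{2}=\sigma^{2}$, is the concrete single inequality to attack directly; everything else in your outline is rigorous.
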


\begin{proof}
Let  $\Gamma:= \varrho_{\sigma}(e^{i t}),$  $-\pi \leq t \leq \pi$ be the boundary curve of the function $\varrho_{\sigma}(z).$ Due to symmetricity of the curve $\Gamma$ about real-axis, it is enough to consider $ 0\leq t \leq \pi.$ Define a function $G_{c}(\tau)$ as follows:
\[G_{c}(\tau):=\left(c-\cosh\left(\sigma( \cos \tau) \right) \cos\left(\sigma (\sin \tau)\right)\right)^{2}+\sinh^{2}\left(\sigma (\cos \tau) \right)\sin^{2}\left( \sigma (\sin \tau) \right),\] 
where $\tau=t/2.$
Observe that $G_{c}(\tau)$ (see Fig. \ref{fig1} for different values of $c$) is the square of the distance from point $(c,0)$ to $\Gamma.$ Now we study the following cases:\\
\textbf{Case 1:} For $\cos \sigma < c \leq 1,$ $G_{c}(\tau)$ is  monotonically decreasing on $[0,\pi/2],$ then \[r_{\sigma c}=\displaystyle {\min_{\tau\in[0,\pi/2]}}\sqrt{G_{c}(\tau)} = \sqrt{G_{c}(\pi/2)} = c - \cos \sigma.\]  
\textbf{Case 2:} When $1\leq c \leq \sigma_{0},$ where $\sigma_{0}<(\cosh \sigma +\cos \sigma)/2$ is a point at which $G_{c}(\tau)$ changes its character  i.e  $G_{c}(\tau)$ is  monotonically decreasing for $1\leq c \leq \sigma_{0}$ and has three critical points $\{0,\tau_{\tilde{c}},\pi/2\}$ for $\sigma_{0}<c\leq (\cosh \sigma +\cos \sigma)/2,$ where $\tau_{\tilde{c}}\in (0,\pi/2)$ is the only root of the equation 
\begin{align}\label{e12}
&2 c \tan \tau \cos (\sigma \sin \tau) \sinh (\sigma \cos \tau)+2 c \sin (\sigma \sin \tau) \cosh (\sigma \cos \tau) \nonumber\\&= \sin (2 \sigma \sin \tau)+\tan \tau \sinh (2 \sigma \cos \tau).
\end{align} 
Note that $\tau_{c}<\tau_{\tilde{c}}$ whenever $c<{\tilde{c}}.$ Further  \[G_{c}(0)-G_{c}(\pi/2)= (\cosh \sigma - \cos \sigma) (\cos \sigma +\cosh \sigma -2 c) \geq 0.\] Therefore this yields \[r_{\sigma c}=\displaystyle{\min _{\tau\in[0,\pi/2]} \left\{\sqrt{G_{c}(0)},\sqrt{G_{c}(\tau_{\tilde{c}})},\sqrt{G_{c}(\pi/2)}\right\}}  = \sqrt{G_{c}(\pi/2)} = c - \cos \sigma.\] 
\textbf{Case 3:} For $(\cosh \sigma + \cos \sigma)/2 \leq c \leq \sigma_{1},$  where $\sigma_{1}<\cosh \sigma$ is a point at which $G_{c}(\tau)$ changes its character i.e $G_{c}(\tau)$ has three critical points $\{0,\tau_{\hat{c}},\pi/2\},$ where $\tau_{\hat{c}}\in(0,\pi/2)$ is the only root of equation \eqref{e12} and $G_{c}(\tau)$ is an increasing function for $\sigma_{1}< \sigma <\cosh \sigma.$ Infact $G_{c}(0)\leq G_{c}(\pi/2).$ Therefore  \[r_{\sigma c}=\displaystyle\min _{\tau\in[0,\pi/2]}\left\{{\sqrt{G_{c}(0)},\sqrt{G_{c}(\tau_{\hat{c}})},\sqrt{G_{c}(\pi/2)}}\right\}=\sqrt{G_{c}(0)} = \cosh \sigma - c.\]
Hence the result follows.
\end{proof}

\begin{figure}
    \centering
   \subfloat{\includegraphics[width=5cm]{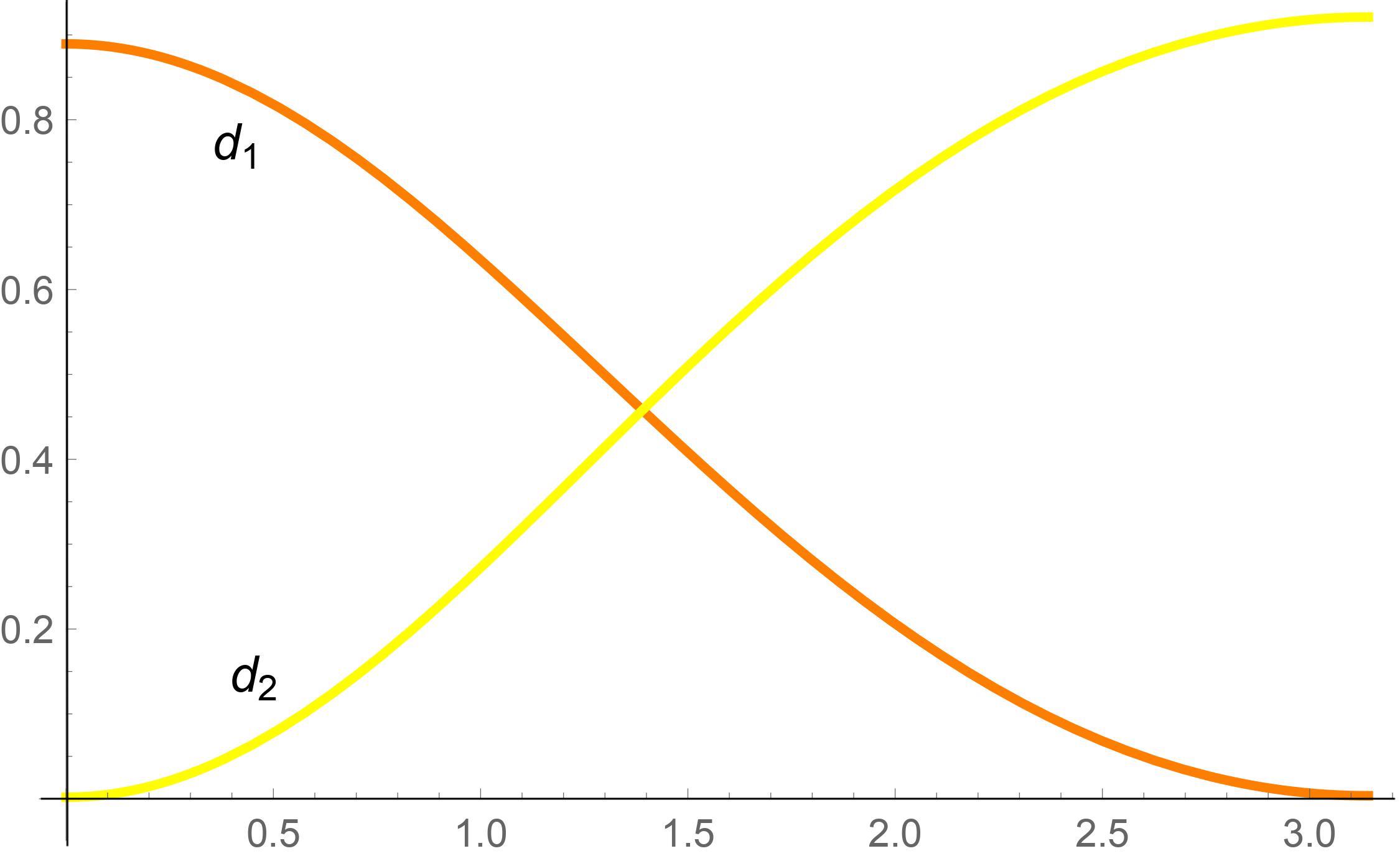}}
   \qquad
    \subfloat{\includegraphics[width=5cm]{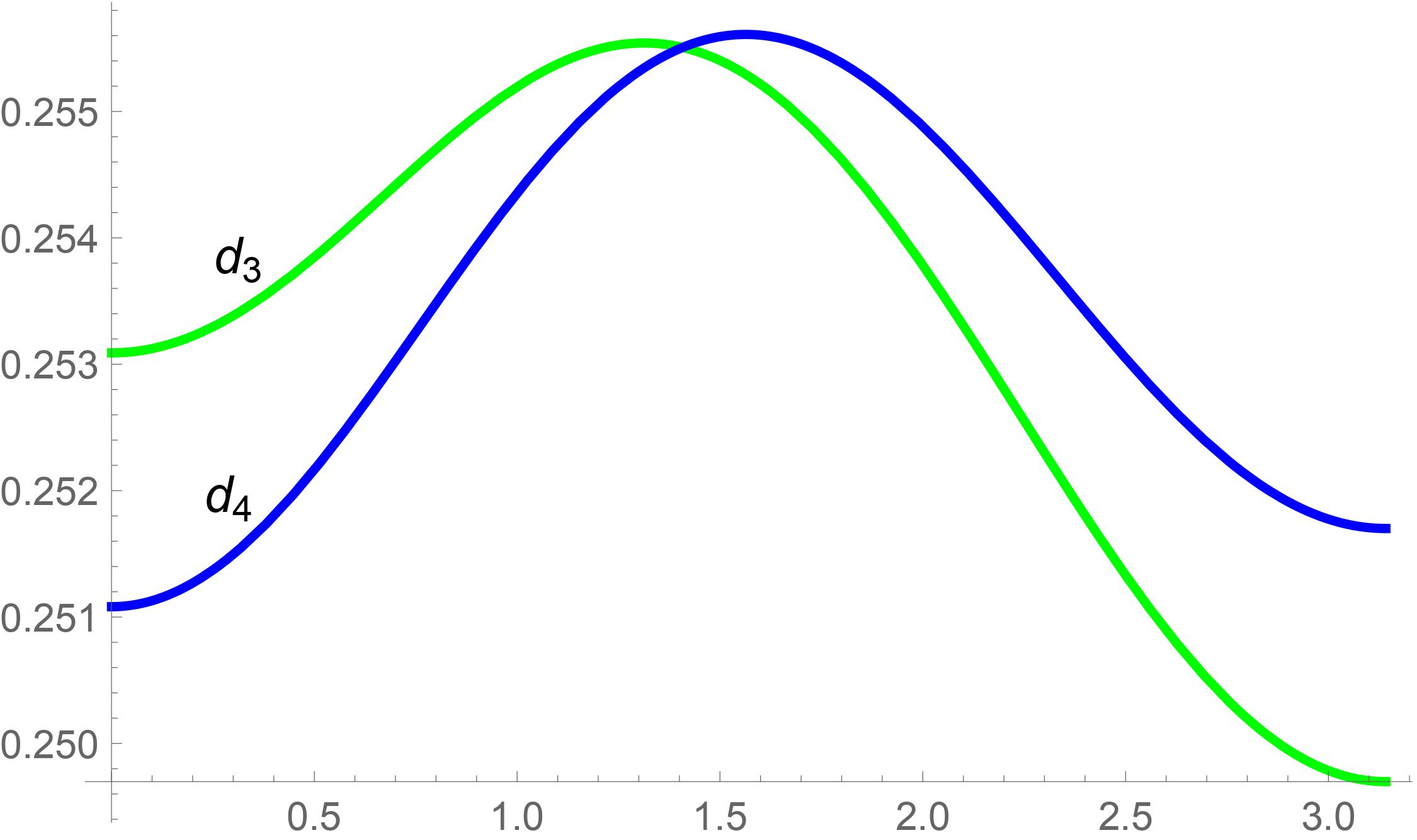}}
    \caption{Graphs of $G_{0.6}(\tau)=d_{1},$ $G_{1.5}(\tau)=d_{3},$ $G_{1.04}(\tau)=d_{3},$ $G_{1.042}(\tau)=d_{4},$ (with $\sigma=1$)  }
    \label{fig1}
\end{figure}

Inclusion results in Lemma \ref{l9}, follows from equation \eqref{e21} and Lemma \ref{l5}.  
\begin{lemma} \label{l9}
For the region $\Omega_{\varrho_{\sigma}}:=\varrho_{\sigma}(\mathbb{D}),$ following inclusion relations hold:
\begin{enumerate}[(i)]
\item $\left\{u:\left|u-( \cosh \sigma + \cos \sigma ) / 2 \right| < (\cosh \sigma - \cos \sigma)/2\right\}\subset\Omega_{\varrho_{\sigma}}.$

\item $\Omega_{\varrho_{\sigma}}\subset \left\{u:\cos \sigma <\operatorname{Re} u < \cosh \sigma \right\}$ and $\Omega_{\varrho_{\sigma}}\subset \left\{u:\cos \sigma < |u| < \cosh \sigma \right\}.$
\item $\Omega_{\varrho_{\sigma}}\subset \left\{u:|\operatorname{Im} u| < l \right\}$ and $\Omega_{\varrho_{\sigma}}\subset \left\{u:|u-(\cosh \sigma+\cos \sigma)/2| < l \right\},$ where $l = |\operatorname{Im} (\cosh(\sigma e^{i t_{0}/2}))|,$ and $t_{0}$ is the root of the equation \[\cos \sigma + \cosh \sigma - 2\cos \left(\sigma \sin \left(t/2\right)\right) \cosh \left(\sigma \cos \left(t/2\right)\right)=0.\]
\end{enumerate}
\end{lemma}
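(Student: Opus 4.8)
The three parts need somewhat different arguments, but (i) and (ii) follow quickly from the two ingredients displayed just before the lemma. For part (i) I would apply Lemma~\ref{l5} with $c=(\cosh\sigma+\cos\sigma)/2$: this is exactly the value at which the two regimes for $r_{\sigma c}$ meet, and both branches reduce there to $r_{\sigma c}=(\cosh\sigma-\cos\sigma)/2$, which is the radius in the statement; hence the displayed disc lies inside $\Omega_{\varrho_{\sigma}}$.

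For part (ii) I would invoke \eqref{e21}. Since $\varrho_{\sigma}$ is even in $\sigma$, assume $\sigma\in(0,\pi/2]$, and fix $z$ with $|z|=r<1$. By \eqref{e21}, both $\operatorname{Re}\varrho_{\sigma}(z)$ and $|\varrho_{\sigma}(z)|$ are squeezed between the value of $\varrho_{\sigma}$ at the point of $|z|=r$ on the negative real axis, which is $\cosh(i\sigma\sqrt{r})=\cos(\sigma\sqrt{r})$, and its value at $z=r$, which is $\cosh(\sigma\sqrt{r})$. On $[0,1]$ the function $t\mapsto\cos(\sigma t)$ is strictly decreasing and $t\mapsto\cosh(\sigma t)$ strictly increasing (here $\sigma\le\pi/2$ is used), so $\cos\sigma<\cos(\sigma\sqrt{r})$ and $\cosh(\sigma\sqrt{r})<\cosh\sigma$. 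This gives $\cos\sigma<\operatorname{Re}\varrho_{\sigma}(z)<\cosh\sigma$ and $\cos\sigma<|\varrho_{\sigma}(z)|<\cosh\sigma$ for every $z\in\mathbb{D}$, i.e.\ both inclusions of (ii).

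For part (iii) the plan is to get the disc inclusion first and then extract the horizontal strip for free. Put $c_{0}=(\cosh\sigma+\cos\sigma)/2$. Because $\varrho_{\sigma}-c_{0}$ is holomorphic and non-constant on $\mathbb{D}$, the function $z\mapsto|\varrho_{\sigma}(z)-c_{0}|^{2}$ is subharmonic and has no interior maximum, so for $|z|<1$ one has $|\varrho_{\sigma}(z)-c_{0}|^{2}<\max_{0\le\tau\le\pi/2}G_{c_{0}}(\tau)$, where $G_{c_{0}}$ is the squared-distance function from the proof of Lemma~\ref{l5}. I would then study $G_{c_{0}}$ on $[0,\pi/2]$ by the same reasoning as in Cases~2--3 there: $\tau=0$ and $\tau=\pi/2$ are critical points with $G_{c_{0}}(0)=G_{c_{0}}(\pi/2)=((\cosh\sigma-\cos\sigma)/2)^{2}$, and there is a single interior critical point, which is then the global maximizer $\tau_{0}$; it solves \eqref{e12} with $c=c_{0}$. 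The remaining task is to recast this condition as $\cos\sigma+\cosh\sigma-2\cos(\sigma\sin(t/2))\cosh(\sigma\cos(t/2))=0$ with $t=2\tau_{0}$, which pins $t_{0}$ down and makes $\max_{\tau}G_{c_{0}}(\tau)=l^{2}$. That yields $\Omega_{\varrho_{\sigma}}\subset\{u:|u-c_{0}|<l\}$, and then for $u\in\Omega_{\varrho_{\sigma}}$ we get $|\operatorname{Im}u|=|\operatorname{Im}(u-c_{0})|\le|u-c_{0}|<l$, which is the other inclusion of (iii).

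The boundary reductions and the monotonicities in (ii) are routine. The main obstacle is the last step of (iii): showing that $G_{c_{0}}$ really does have a unique interior critical point on $[0,\pi/2]$ (so that, the two endpoints being minima of equal height, it must be the maximum), and then verifying that this maximizer is precisely the $t_{0}$ of the statement, i.e.\ matching the critical-point condition \eqref{e12} at $c=c_{0}$ with the displayed equation for $t_{0}$ and with the stated value of $l$. This rests on exactly the sign and monotonicity analysis that controls Cases~2 and~3 of Lemma~\ref{l5}, now specialized to the single value $c=c_{0}$, and it is the place where I would check the details most carefully.
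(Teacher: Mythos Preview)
Your treatment of (i) and (ii) is correct and matches the paper exactly: its entire proof of Lemma~\ref{l9} is the single sentence ``Inclusion results in Lemma~\ref{l9}, follows from equation~\eqref{e21} and Lemma~\ref{l5}.''

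For (iii) the paper supplies no further detail, and the step you flag as the main obstacle is indeed a genuine gap---one that, on inspection, does not close. Write $X(\tau)=\cos(\sigma\sin\tau)\cosh(\sigma\cos\tau)$ and $Y(\tau)=\sin(\sigma\sin\tau)\sinh(\sigma\cos\tau)$, so that
\[
G_{c_{0}}'(\tau)=2\bigl[(X(\tau)-c_{0})\,X'(\tau)+Y(\tau)\,Y'(\tau)\bigr].
\]
At the parameter $\tau_{0}$ determined by $X(\tau_{0})=c_{0}$ (this is the $t_{0}$ of the statement, with $t_{0}=2\tau_{0}$) we therefore get $G_{c_{0}}'(\tau_{0})=2Y(\tau_{0})Y'(\tau_{0})$. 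But the conditions $X(\tau)=c_{0}$ and $Y'(\tau)=0$ are \emph{different} equations; a direct computation (or a numerical check at $\sigma=1$, where $\tau_{0}\approx 0.703$ while $Y'$ vanishes near $\tau\approx 0.704$) shows $Y'(\tau_{0})\neq 0$ in general. Hence $\tau_{0}$ is not a critical point of $G_{c_{0}}$, the proposed ``recast'' of \eqref{e12} into the displayed equation for $t_{0}$ cannot be carried out, and the maximiser of $G_{c_{0}}$ lies at a slightly different parameter. The same observation affects the strip inclusion: the sharp bound on $|\operatorname{Im}u|$ is $\max_{\tau}|Y(\tau)|$, achieved where $Y'=0$, not $|Y(\tau_{0})|$. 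The Case~2--3 analysis of Lemma~\ref{l5} does not rescue this, since that argument locates \emph{minima} of $G_{c}$ and never identifies the interior critical point explicitly. In short, your caution was warranted; the identification fails, and this may reflect a slight imprecision in the statement of (iii) itself rather than a flaw in your overall strategy.
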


For $\sigma=1,$ Lemma \ref{l5} leads to the following result for the region $\Omega_{\varrho_{1}}=:\Omega_{\varrho}.$ 
\begin{theorem}\label{t14}
The region
  $\Omega_{\varrho}:=\varrho(\mathbb{D})\supset\left\{u\in \mathbb{C}:|u-c|<r_{c}\right\}$
where 
\begin{align}\label{r1}
r_{c}=\left\{\begin{array}{cl}
  c - \cos 1, & \cos 1 < c \leq (\cosh 1 + \cos 1)/2\\
  \cosh 1 - c, & (\cosh 1 + \cos 1)/2 \leq c < \cosh 1.
\end{array}\right.
\end{align}
\end{theorem}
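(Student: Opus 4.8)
The plan is to derive Theorem~\ref{t14} as the special case $\sigma = 1$ of Lemma~\ref{l5}, so the work is essentially a matter of verifying that the hypotheses of the lemma specialize correctly and transcribing its conclusion. First I would set $\sigma = 1$ in the statement of Lemma~\ref{l5}: since $\sigma = 1$ lies in the admissible range $[-\pi/2,\pi/2]\setminus\{0\}$, the lemma applies directly. With this substitution, $\cos\sigma$ becomes $\cos 1$, $\cosh\sigma$ becomes $\cosh 1$, and the breakpoint $(\cosh\sigma+\cos\sigma)/2$ becomes $(\cosh 1+\cos 1)/2$. The piecewise formula for $r_{\sigma c}$ then reads exactly as the claimed formula \eqref{r1} for $r_c$, and the inclusion $\{u\in\mathbb{C}:|u-c|<r_{\sigma c}\}\subset\varrho_\sigma(\mathbb{D})$ becomes $\{u\in\mathbb{C}:|u-c|<r_c\}\subset\varrho_1(\mathbb{D})=\varrho(\mathbb{D})=\Omega_\varrho$, which is precisely the assertion of the theorem.

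The only genuine point requiring a word of justification is the consistency of notation: the excerpt defines $\varrho(z) = \varrho_1(z) = \cosh\sqrt{z}$ (which matches $\cosh\sigma\sqrt{z}$ at $\sigma=1$) and sets $\Omega_\varrho := \Omega_{\varrho_1} = \varrho_1(\mathbb{D})$, so the region appearing in Lemma~\ref{l5} at $\sigma = 1$ is literally $\Omega_\varrho$. Thus I would write the proof in one or two sentences: apply Lemma~\ref{l5} with $\sigma = 1$ and note that $\varrho_1 = \varrho$, so that $r_{1c} = r_c$ as given by \eqref{r1} and the stated inclusion holds.

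I do not anticipate any real obstacle here, since Theorem~\ref{t14} is a direct corollary; the substantive content (the case analysis on the behaviour of $G_c(\tau)$, the location of critical points, and the sign of $G_c(0)-G_c(\pi/2)$) has already been carried out in the proof of Lemma~\ref{l5} for general $\sigma$. If anything, the mild subtlety is purely cosmetic — confirming that the constant $\cos 1 \approx 0.540$ and $\cosh 1 \approx 1.543$ are consistent with the non-degenerate ranges in the two branches, i.e. that $\cos 1 < (\cosh 1 + \cos 1)/2 < \cosh 1$, which holds since $\cos 1 < \cosh 1$. Hence the proof amounts to: \emph{Take $\sigma = 1$ in Lemma~\ref{l5}.}

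\begin{proof}
Setting $\sigma = 1$ in Lemma~\ref{l5} (which is permissible, as $1 \in [-\pi/2,\pi/2]\setminus\{0\}$) and recalling that $\varrho(z) = \varrho_1(z) = \cosh\sqrt{z}$ and $\Omega_\varrho = \Omega_{\varrho_1} = \varrho_1(\mathbb{D})$, the inclusion $\{u\in\mathbb{C}:|u-c|<r_{1c}\}\subset\varrho_1(\mathbb{D})$ becomes $\{u\in\mathbb{C}:|u-c|<r_c\}\subset\Omega_\varrho$, where $r_c = r_{1c}$ is given by the piecewise formula \eqref{r1} (with $\cos 1 < (\cosh 1 + \cos 1)/2 < \cosh 1$ since $\cos 1 < \cosh 1$). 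This is precisely the assertion of the theorem.
\end{proof}
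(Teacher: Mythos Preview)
Your proposal is correct and matches the paper's own treatment exactly: the paper presents Theorem~\ref{t14} as an immediate consequence of Lemma~\ref{l5} at $\sigma=1$, stating just before the theorem that ``For $\sigma=1$, Lemma~\ref{l5} leads to the following result for the region $\Omega_{\varrho_{1}}=:\Omega_{\varrho}$'' and giving no further proof. Your additional check that $1\in[-\pi/2,\pi/2]\setminus\{0\}$ and that $\cos 1<(\cosh 1+\cos 1)/2<\cosh 1$ is a harmless (and welcome) bit of tidiness beyond what the paper writes.
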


\begin{remark}\label{l12}
Theorem \ref{t14}  ensures that $D_{c}:=|u-c|<r_{c},$ is the maximal disc subscribed in  $\varrho(\mathbb{D}),$ when $c=(\cosh 1 +\cos 1)/2$ and $r_{c}=(\cosh 1 - \cos 1)/2.$ Thus $ D_{c} \subset \varrho(\mathbb{D}).$
\end{remark}

For all the subsequent results, we shall assume $c_{0}:=\cos 1$ and $c_{1}:=\cosh 1.$  
\begin{lemma}\label{t2-10} For the region $\Omega_{\varrho}:= \varrho{(\mathbb{D})},$ we have the following inclusion relations:

\begin{enumerate}[(i)]
\item $\left\{u:\left|u-(c_{0}+c_{1})/ 2 \right| < (c_{1}-c_{0})/2\right\}\subset\Omega_{\varrho}.$ 
\item
 $\Omega_{\varrho} \subset \left\{u:|\arg u| < m \right\},$  where   $m\approx 0.506053$  $\approx (0.322163)$ $\pi/2$ $\approx$ $28.9947^{\circ}.$
\item $\Omega_{\varrho}\subset \left\{u:c_{0} <\operatorname{Re} u < c_{1} \right\}$  and $\Omega_{\varrho}\subset \left\{u:c_{0}< |u| < c_{1} \right\}.$
\item $\Omega_{\varrho}\subset \left\{u:|\operatorname{Im} u| < l \right\}$ and $\Omega_{\varrho}\subset \left\{u:|u-(c_{0}+c_{1})/2| < l \right\},$ where $l = |\operatorname{Im} (\cosh( e^{i t_{0}/2}))|$ and $t_{0}$ is the solution of the equation \[c_{0}+c_{1} - 2\cos \left( \sin \left(t/2\right)\right) \cosh \left( \cos \left(t/2\right)\right)=0 .\]
 \end{enumerate}
\end{lemma}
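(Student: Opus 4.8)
The plan is to derive all four inclusions of Lemma~\ref{t2-10} as the specialization $\sigma = 1$ of Lemma~\ref{l9} together with the extremal identities in \eqref{e21}; the only genuinely new content is part~(ii), the argument bound, which is not an instance of anything stated for general $\sigma$. For part~(i) I would simply set $\sigma = 1$ in Lemma~\ref{l9}(i) and rewrite $\cosh 1, \cos 1$ as $c_1, c_0$; no work is required. For parts~(iii) and~(iv) the same substitution into Lemma~\ref{l9}(ii) and Lemma~\ref{l9}(iii) gives the claimed strip, annulus, and half-plane containments verbatim, since the defining equation for $t_0$ in~(iv) is exactly the $\sigma=1$ form of the equation in Lemma~\ref{l9}(iii).

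The substantive step is part~(ii). Here the idea is that $\Omega_\varrho$ is a bounded convex region symmetric about the positive real axis with $0 \notin \overline{\Omega_\varrho}$ (the last fact from part~(iii), $\operatorname{Re} u > c_0 > 0$), so the supremum of $|\arg u|$ over $\Omega_\varrho$ is attained on the boundary curve $\Gamma = \varrho(e^{it})$ at a point where the ray from the origin is tangent to $\Gamma$. Parametrizing the upper half of the boundary by $u(t) = \cosh(e^{it/2}) = \cosh(\cos(t/2))\cos(\sin(t/2)) + i\,\sinh(\cos(t/2))\sin(\sin(t/2))$ for $0 \le t \le \pi$, I would maximize $\arg u(t) = \arctan\!\big(\operatorname{Im} u(t)/\operatorname{Re} u(t)\big)$ by setting $\frac{d}{dt}\arg u(t) = 0$, i.e.\ solving $\operatorname{Re} u(t)\,(\operatorname{Im} u)'(t) - \operatorname{Im} u(t)\,(\operatorname{Re} u)'(t) = 0$. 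This is a single transcendental equation in $t$; solving it numerically yields the maximizing parameter, and evaluating $\arg u$ there produces the stated value $m \approx 0.506053 \approx 0.322163\,(\pi/2) \approx 28.9947^\circ$. One should also check the endpoint values $\arg u(0) = \arg(\cosh 1) = 0$ and $\arg u(\pi) = \arg(\cos 1) = 0$ to confirm the critical point is the global maximum.

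The main obstacle is purely computational: verifying that the tangent-line critical-point equation has a unique relevant root in $(0,\pi)$ and that it indeed furnishes the maximum of $\arg u$ rather than a minimum or saddle. Since $\varrho$ is univalent and $\Omega_\varrho$ convex, geometric intuition guarantees a unique tangent ray from the origin to the upper boundary arc, but to make this rigorous one would examine the sign of $\operatorname{Re} u(t)\,(\operatorname{Im} u)'(t) - \operatorname{Im} u(t)\,(\operatorname{Re} u)'(t)$: it is positive near $t = 0$ (where $\arg u$ increases from $0$) and negative near $t = \pi$ (where $\arg u$ decreases back to $0$), so by continuity it vanishes, and monotonicity of the relevant factor forces uniqueness. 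The remaining numerical evaluation of $m$ is then routine, and the three decimal representations quoted are just unit conversions of the same quantity.
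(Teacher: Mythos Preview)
Your proposal is correct and follows essentially the same approach as the paper: parts (i), (iii), (iv) are obtained by specializing Lemma~\ref{l9} and \eqref{e21} to $\sigma=1$, and for part~(ii) the paper likewise parametrizes the boundary $\Gamma=\varrho(e^{it})$, reduces to maximizing $\arctan\!\big(\tan(\sin(t/2))\tanh(\cos(t/2))\big)$, differentiates, and solves the resulting transcendental equation numerically to locate the maximizing $t\approx 1.91672$. Your additional remarks on endpoint checking and on using convexity of $\Omega_\varrho$ to guarantee a unique tangent ray from the origin are a welcome tightening of the argument, but the underlying strategy is identical.
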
 
\begin{proof}
 We can obtain  $(i)$, $(iii)-(iv)$ from  equations in \eqref{e21}, Remark \ref{l12} and Lemma \ref{l9} (for $\sigma=1$).  
 For part $(ii)$ let  
$\Gamma:=\partial(\varrho(z))=\varrho(e^{i t}),$  $-\pi \leq t\leq\pi,$  
 represents the boundary curve of $\varrho(z).$ Assume that 
 \[\operatorname{Re}{\varrho(e^{i t})}=\cos \left(\sin \left( t /2\right)\right) \cosh \left(\cos \left(t/2\right)\right)=:X(t)\] and 
\[\operatorname{Im}{\varrho(e^{i t})}= \sin \left(\sin \left(t/2\right)\right) \sinh \left(\cos \left(t/2\right)\right)=:Y(t).\] 
 Consider 
 \begin{align*}
 |\arg \varrho(z)| & < \max _{|z|=1 } \hskip 0.1cm |\arg \varrho(z)| = \max _{t\in [-\pi,\pi]}|\arg \varrho(e^{it})| = \max _{t\in [-\pi,\pi]} \tan^{-1} (Y(t)/X(t))\\&
 = \max_{t\in [-\pi,\pi]} \tan^{-1}(\tan \left(\sin \left(t/2\right)\right) \tanh \left(\cos \left(t/2\right)\right))=: m(t)
 \end{align*}

Observe that $\tan^{-1} x$ is a monotonically increasing real valued function. Therefore it is enough to obtain the maximum of $m(t).$ The roots of \begin{align*}
    m'(t)&= 0.5 (\cos (t/2) \tanh (\cos (t/2)) \sec ^2(\sin (t/2))  \\&\quad -\sin (t/2) \tan (\sin (t/2)) \sech^2(\cos (t/2)))=0
\end{align*}
 are  $t_{1}\approx -1.91672$ and $t_{2}\approx 1.91672.$ As $t_{1}<t_{2},$ therefore maximum of  $m(t)$ is attained at $t=t_{2}.$ Hence the inclusion in $(ii)$ follows. 
 \end{proof}

 In Theorem \ref{t13} and Corollary \ref{t12}, we prove inclusion results pertaining to various classes along with the classes $\mathcal{ST}_{p}(\gamma),$ $\mathcal{S}^{*}_{hpl}(s),$ $k-\mathcal{ST}$  and $\mathcal{M}(\beta)$ \cite{R.M. Ali (2008),Kanas & Ebadian,S. Kanas & A. Wisniowska,Uralegaddi(1994)}  defined below:

 \[\mathcal{M}(\beta):=\left\{f \in\mathcal{A}: \frac{zf'(z)}{f(z)} \prec \frac{1+(2\beta-1)z}{1+z}, \beta >1 \right\}, \]

 \[\mathcal{ST}_{p}(\gamma) =\left\{f\in\mathcal{A}:\operatorname{Re}\frac{zf'(z)}{f(z)}+\gamma > \left|\frac{zf'(z)}{f(z)} - \gamma\right|,  \gamma >0 \right\},\]
 
 \[\mathcal{S}^{*}_{hpl}(s) :=\left\{f\in\mathcal{A}: \frac{zf'(z)}{f(z)} \prec (1 - z)^{-s} = e^{-s \log(1-z)},  0<s\leq 1 \right\},\]

\[
k-\mathcal{ST} :=\left\{f\in\mathcal{A}:\operatorname{Re} \frac{zf'(z)}{f(z)}> k \left| \frac{zf'(z)}{f(z)} - 1 \right|,  k\geq 0 \right\}.	\]

\begin{theorem}\label{t13}
Let $f\in \mathcal{S}^{*}_{\varrho_{\sigma}}$ then for each $\sigma\in[-\pi/2,\pi/2]-\{0\},$ following inclusions hold:

\begin{enumerate}[(i)]
\item $\mathcal{S}^{*}_{\varrho_{\sigma}}\subset S^{*}(\zeta),$ where $\zeta=\cos \sigma.$
%\item $\mathcal{S}^{*}_{\varrho}\subset \mathcal{SS}^{*}(\beta),$ where $\beta \approx 0.3222163.$
\item $\mathcal{S}^{*}_{\varrho_{\sigma}}\subset \mathcal{M}(\beta),$ where $\beta=\cosh \sigma.$
\item $\mathcal{S}^{*}_{q_{\kappa}}\subset \mathcal{S}^{*}_{\varrho_{\sigma}},$ whenever $\kappa \leq 1- \cos^{2} \sigma .$
\item $k-\mathcal{ST}\subset \mathcal{S}^{*}_{\varrho_{\sigma}},$ whenever $k \geq \cosh \sigma/(\cosh \sigma - 1).$ 
%\item $\mathcal{S}^{*}_{\varrho}$ $\subset$ $\mathcal{S}_{hpl}^{*}(s) ,$ whenever $ \log(\sec 1)/ \log 2 \leq s \leq 1.$

\item $\mathcal{S}^{*}_{\varrho_{\sigma}}$ $\subset$ $\mathcal{S}_{hpl}^{*}(s) ,$ whenever $ \log(\sec \sigma)/ \log 2 \leq s \leq 1,$ $\sigma\in[-\pi/3,\pi/3]-\{0\}.$

\item $\mathcal{S}^{*}_{\varrho_{\sigma}}\subset \mathcal{S}_{L}^{*}(s),$ whenever $1 - \sqrt{\cos \sigma} \leq s\leq \frac{1}{\sqrt{2}}.$
%\item  $\mathcal{S}^{*}_{\varrho}\subset \mathcal{ST}_{p}(\gamma),$ whenever $\gamma \geq \gamma_{0}\approx 0.0654238.$

\end{enumerate}
\end{theorem}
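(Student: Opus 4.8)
The plan is to prove each inclusion by the standard Ma--Minda technique: translate the subordination $zf'(z)/f(z)\prec\varrho_\sigma(z)$ into the containment $\Omega_{\varrho_\sigma}\subset$ (target region), so that $zf'(z)/f(z)$ takes values in $\Omega_{\varrho_\sigma}$ and hence in the target region, which is exactly membership in the target class. The geometric facts from \eqref{e21} and Lemma \ref{l9} (the $\sigma$-analogue of Lemma \ref{t2-10}) supply all the needed bounds on $\Omega_{\varrho_\sigma}$: $\cos\sigma<\RE u<\cosh\sigma$, $\cos\sigma<|u|<\cosh\sigma$, and the maximal inscribed disc centered at $(\cosh\sigma+\cos\sigma)/2$ of radius $(\cosh\sigma-\cos\sigma)/2$.

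For (i) and (ii), I would just invoke Lemma \ref{l9}(ii): $\RE(zf'(z)/f(z))>\cos\sigma$ gives $f\in\mathcal S^*(\cos\sigma)$; for $\mathcal M(\beta)$ with $\beta=\cosh\sigma$, the region $\{(1+(2\beta-1)z)/(1+z):z\in\mathbb D\}$ is the half-plane $\RE w<\beta$, and $\Omega_{\varrho_\sigma}\subset\{\RE u<\cosh\sigma\}$ closes it (one should check the boundary/openness carefully since $\cosh\sigma$ is a boundary value attained only at $z=1$). For (iii), I would use the known description of $q_\kappa(\mathbb D)=\{w:|w^2-1|<\kappa\}$ (the image of $\sqrt{1+\kappa z}$) and show it sits inside $\Omega_{\varrho_\sigma}$; the cleanest route is to use the inscribed-disc characterization of $\Omega_{\varrho_\sigma}$ — but since $q_\kappa(\mathbb D)$ is an oval through $1$, not a disc, I would instead argue via $|u|$: points of $q_\kappa(\mathbb D)$ satisfy $|u^2-1|<\kappa$, and combined with Lemma \ref{l9}(ii) one deduces $\cos\sigma<|u|<\cosh\sigma$ with the sharp constant $\kappa\le 1-\cos^2\sigma$ forcing the left inequality; I expect the extreme point calculation here to be the fussiest. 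For (iv), $k$-$\mathcal{ST}$ maps into the parabolic/elliptic region whose rightmost relevant constraint is a disc-type condition; the stated bound $k\ge\cosh\sigma/(\cosh\sigma-1)$ should come from requiring the disc $D_c$ (with $c=(\cosh\sigma+\cos\sigma)/2$, or more naturally the disc $|u-1|<1-\cos\sigma$ — wait, one must match whichever disc the conic region of $k$-$\mathcal{ST}$ inscribes) to contain $\Omega_{\varrho_\sigma}$, i.e.\ $\Omega_{\varrho_\sigma}$ contains the conic domain; I would set this up by the known fact that $k$-$\mathcal{ST}\subset\mathcal S^*(\phi)$ whenever a certain disc centered on the real axis lies in $\phi(\mathbb D)$, then apply Lemma \ref{l5}/\ref{l9}(i).

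For (v), the region for $\mathcal S^*_{hpl}(s)$ is $(1-z)^{-s}(\mathbb D)$; its boundary is a spiral-like curve, and the relevant containment $(1-z)^{-s}(\mathbb D)\subset\Omega_{\varrho_\sigma}$ should reduce to comparing the image on $|u|$ and on $\arg u$ — here the restriction $\sigma\in[-\pi/3,\pi/3]$ and the lower bound $s\ge\log(\sec\sigma)/\log 2$ look like they come from matching $|u|$ at $z=-1$: $(1-(-1))^{-s}=2^{-s}$ versus the inner bound $\cos\sigma$, giving $2^{-s}\ge\cos\sigma$, i.e.\ $s\le\log(\sec\sigma)/\log2$ — so I need to be careful about the direction; likely the correct reading is $2^{-s}=\min|u|$ on that region and we need it $\ge\cos\sigma$. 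I would verify the monotonicity of $|(1-e^{it})^{-s}|$ and locate its extrema. For (vi), $\mathcal S^*_L(s)$ corresponds to $\phi(z)=\sqrt{1+sz}$ (the shifted lemniscate), $\phi(\mathbb D)=\{w:|w^2-1|<s\}$... actually here it is a disc-type image $\{w:|w-1|<\cdots\}$? — I would use that $\sqrt{1+sz}(\mathbb D)$ contains the disc $|w-1|<1-\sqrt{1-s}$ and is contained in $|w-1|<\sqrt{1+s}-1$; requiring $\Omega_{\varrho_\sigma}\subset\sqrt{1+sz}(\mathbb D)$ via the inscribed disc $D_c$ and matching endpoints $\cos\sigma=(\text{left end})$ gives $1-\sqrt{\cos\sigma}\le s$, with the upper cap $s\le 1/\sqrt2$ just being the constraint that keeps $\sqrt{1+sz}$ of the right normalization/univalence type.

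The main obstacle, I expect, is not any single subordination argument — each is a one-line geometric containment once the regions are described — but rather assembling the correct closed-form description of each target region $\phi(\mathbb D)$ (lemniscate-type ovals for $q_\kappa$ and $\mathcal S^*_L$, half-planes for $\mathcal S^*(\zeta)$ and $\mathcal M(\beta)$, conic sections for $k$-$\mathcal{ST}$, the spiral region for $\mathcal S^*_{hpl}$) and then pinning down the \emph{sharp} threshold constants by extremal-point computations (evaluating the relevant boundary functionals at $z=\pm1$ or at the critical $t$ from Lemma \ref{l9}), and getting the inequality directions right. I would organize the write-up as items (i)--(vi), each citing Lemma \ref{l9} or Lemma \ref{l5} for the $\Omega_{\varrho_\sigma}$-side and the cited papers for the $\phi(\mathbb D)$-side, then doing the short endpoint comparison; I would double-check openness/closure issues at the boundary values $\cos\sigma,\cosh\sigma$ since several thresholds are attained only in the limit $z\to\pm1$.
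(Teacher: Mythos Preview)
Your overall strategy matches the paper's: each inclusion is reduced to a region containment using the bounds from Lemma~\ref{l9} and a known description of the target domain, and parts (i)--(ii) are handled exactly as in the paper. For (iii) the paper's argument is simpler than the ``fussiest'' computation you anticipate: since the leftmost real point of $q_\kappa(\mathbb D)=\{u:|u^2-1|<\kappa\}$ is $\sqrt{1-\kappa}$, one simply requires $\sqrt{1-\kappa}\ge\cos\sigma$. For (iv) the paper does not pass through an inscribed disc; it writes the $k$-$\mathcal{ST}$ region (for $k>1$) explicitly as the interior of an ellipse with center $x_1=k^2/(k^2-1)$ and horizontal semi-axis $a_1=k/(k^2-1)$, and imposes $x_1+a_1\le\cosh\sigma$ directly, which unwinds to $k\ge\cosh\sigma/(\cosh\sigma-1)$.

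There are two concrete slips. In (v) you wrote the containment backwards: $\mathcal{S}^*_{\varrho_\sigma}\subset\mathcal{S}^*_{hpl}(s)$ means $\Omega_{\varrho_\sigma}\subset(1-z)^{-s}(\mathbb D)$, not the reverse; the paper invokes (from \cite{Kanas & Ebadian}) that the left edge of the target region sits at $2^{-s}$, and then requires $2^{-s}\le\cos\sigma$, giving $s\ge\log(\sec\sigma)/\log 2$. In (vi) you misidentify the class: here $\mathcal{S}^*_L(s)$ is the Lima\c{c}on class $\phi(z)=(1+sz)^2$ of \cite{Masih n Kanas}, not a lemniscate $\sqrt{1+sz}$; the paper uses the Lima\c{c}on's inscribed disc $\{u:|u-1|<1-(1-s)^2\}$ and the condition $1-(1-s)^2\ge 1-\cos\sigma$, which yields $s\ge 1-\sqrt{\cos\sigma}$. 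Once those two identifications are corrected, your plan goes through essentially as the paper's.
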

 
 \begin{proof}

Observe that, in equation \eqref{e21}, when $r$ tends to $1^{-},$  sharp bounds on real part and modulus of $\varrho_{\sigma}(z)$ are obtained. Consequently, due to Lemma \ref{l9} the inclusions in $(i)$ and $(ii)$ are true for the class $\mathcal{S}^{*}_{\varrho_{\sigma}}.$ We know that $q_{\kappa}(z)=\sqrt{1+\kappa z}$ where $0<\kappa\leq 1,$ is associated with the region  $|u^{2}-1|<\kappa.$ Therefore part $(iii)$ can be easily established as $q_{\kappa}(\mathbb{D})$ lies in $\Omega_{\varrho_{\sigma}},$ if and only if, $\sqrt{1-\kappa} \geq \cos \sigma,$ which implies $\kappa\leq 1- \cos ^{2} \sigma .$
For part $(iv),$ let $\Gamma_{k}=\left\{u\in\mathbb{C}:\operatorname{Re} u > k |u-1|\right\},$ where $k\geq 0.$ When $k>1,$ the set $\Gamma_{k}$ represents the interior of an ellipse, 
\[\gamma_{k}:=\left\{(x,y):\dfrac{\left(x-x_{1}\right)^{2}}{a_{1}^{2}}+\frac{y^{2}}{b_{1}^{2}}=1\right\},\] 
where $x_{1}=k^{2}/(k^{2}-1),$ $a_{1}=k/(k^{2}-1)$ and $b_{1}=1/\sqrt{k^{2}-1}.$ For $\gamma_{k}$ to lie in $\Omega_{\varrho_{\sigma}}$ we must have $x_{1}+a_{1}\leq \cosh \sigma,$ which gives a sufficient condition for $\gamma_{k}$ to lie in $\Omega_{\varrho_{\sigma}},$ this leads us to the required condition.  From \cite{Kanas & Ebadian} we know that $\operatorname{Re}(1 - z)^{-s} > 2^{-s}.$ Therefore for $(v)$ to hold true $2^{-s}\leq \cos \sigma,$ which gives $\log(\sec \sigma)/ \log 2 \leq s \leq 1,$ provided $-\pi/3\leq \sigma \leq \pi/3.$ Furthermore, it was demonstrated in \cite{Masih n Kanas}, that $L_{S}(\mathbb{D})\supset \{u:|u-1|<1-(1-s)^{2}\},$ where $0< s \leq 1/\sqrt{2}.$ Thus  for  $(vi)$ to hold true we must have $1 - (1-s)^{2}  \geq 1 - \cos \sigma.$ Thus $\mathcal{S}^{*}_{\varrho_{\sigma}}\subset \mathcal{S}^{*}_{L}(s)$ for each $ s \geq 1 -\sqrt{\cos \sigma}.$ 
\end{proof} 
In the following Corollary we prove inclusion results for the class $\mathcal{S}^{*}_\varrho.$ 
\begin{corollary}\label{t12}
 For each function  $f\in \mathcal{S}^{*}_{\varrho}$ the following inclusions hold:
\begin{enumerate}[(i)]
\item $\mathcal{S}^{*}_{\varrho}\subset \mathcal{S}^{*}(c_{0}).$ 
\item $\mathcal{S}^{*}_{\varrho}\subset \mathcal{M}(c_{1}).$ 
\item $\mathcal{S}^{*}_{\varrho}\subset \mathcal{SS}^{*}(\beta),$ where $\beta \approx 0.3222163.$
\item $\mathcal{S}^{*}_{q_{\kappa}}\subset \mathcal{S}^{*}_{\varrho},$ whenever $\kappa \leq 1- c_{0}^{2} .$
\item $k-\mathcal{ST}\subset \mathcal{S}^{*}_{\varrho},$ whenever $k \geq c_{1}/(c_{1} - 1).$ 
\item $\mathcal{S}^{*}_{\varrho}$ $\subset$ $\mathcal{S}_{hpl}^{*}(s),$ whenever $ -\log c_{0}/ \log 2 \leq s \leq 1.$

\item $\mathcal{S}^{*}_{\varrho}\subset \mathcal{S}_{L}^{*}(s),$ whenever $1 - \sqrt{c_{0}} \leq s\leq \frac{1}{\sqrt{2}}.$
\item  $\mathcal{S}^{*}_{\varrho}\subset \mathcal{ST}_{p}(\gamma),$ whenever $\gamma \geq \gamma_{0}\approx 0.0654238.$
\end{enumerate}
\begin{proof}
Clearly parts $(i)-(ii)$ and $(iv)-(vii)$ can be obtained as a result of Theorem \ref{t13} for $\sigma=1.$ Part $(iii)$ is true due to Lemma \ref{t2-10}, for the class $\mathcal{S}^{*}_{\varrho}$ (see Fig. \ref{f2}). For $(viii)$ in order to show  $\mathcal{S}^{*}_{\varrho}\subset \mathcal{ST}_{p}(\gamma),$  we must have $|u - \gamma|-\operatorname{Re} u <\gamma,$ where $u(z)=\cosh\sqrt{z}.$ For $z=e^{it}$ we have
\[H(\tau):=\frac{\sin^{2}(\sin \tau)\sinh ^{2}(\cos \tau)}{4\cos(\sin \tau)\cosh(\cos \tau)}<\gamma,\] where $\tau=t/2.$ Clearly $H'(\tau)$ vanishes on $\{0,\tilde{\tau},\pi/2\},$ with $\tau=\tilde{\tau}\approx 0.832934$ as the only root of the equation
\begin{align*}
   &\tan (\sin \tau) \tanh(\cos \tau) ((\cos \tau (\cos (2 \sin \tau)+3) \sinh (\cos \tau) \sec (\sin \tau))\\&\quad-\sin \tau \sin (\sin \tau) (\cosh (2 \cos \tau)+3) \sech(\cos \tau))=0
\end{align*} in $(0,\pi/2).$ Therefore $\max_{\tau\in [0,\pi/2]}H(\tau)=H(\tilde{\tau})\approx 0.0654238.$ Observe that $\mathcal{ST}_{p}(\gamma_{1})\subset \mathcal{ST}_{p}(\gamma_{2})$ whenever $\gamma_{1}< \gamma_{2}.$ This leads to the required inclusion relation.
\end{proof}
\end{corollary}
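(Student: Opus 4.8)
The plan is to prove Corollary \ref{t12} by specializing Theorem \ref{t13} to the value $\sigma = 1$ (so that $\cos\sigma = c_0$ and $\cosh\sigma = c_1$) for all parts except (iii) and (viii), which require separate arguments drawn from the geometry of $\Omega_\varrho$ already established in Lemma \ref{t2-10}. Concretely, parts (i), (ii), (iv), (v), (vi), (vii) are immediate: setting $\sigma = 1$ in Theorem \ref{t13}(i)--(vi) respectively yields $\mathcal{S}^*_\varrho \subset \mathcal{S}^*(c_0)$, $\mathcal{S}^*_\varrho \subset \mathcal{M}(c_1)$, $\mathcal{S}^*_{q_\kappa} \subset \mathcal{S}^*_\varrho$ for $\kappa \le 1 - c_0^2$, $k\text{-}\mathcal{ST} \subset \mathcal{S}^*_\varrho$ for $k \ge c_1/(c_1-1)$, $\mathcal{S}^*_\varrho \subset \mathcal{S}^*_{hpl}(s)$ for $-\log c_0/\log 2 \le s \le 1$ (noting $\log(\sec 1) = -\log(\cos 1) = -\log c_0$ and that $1 \in [-\pi/3,\pi/3]$ fails — so I must instead observe that the bound $\operatorname{Re}(1-z)^{-s} > 2^{-s}$ and the requirement $2^{-s} \le c_0$ still make sense directly for $\sigma = 1$ since $c_0 = \cos 1 \approx 0.5403 < 1$, giving $s \ge -\log c_0/\log 2 \approx 0.887$), and $\mathcal{S}^*_\varrho \subset \mathcal{S}^*_L(s)$ for $1 - \sqrt{c_0} \le s \le 1/\sqrt{2}$.

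Part (iii), the inclusion $\mathcal{S}^*_\varrho \subset \mathcal{SS}^*(\beta)$ with $\beta \approx 0.3222163$, follows from the sector containment established in Lemma \ref{t2-10}(ii): since $\Omega_\varrho \subset \{u : |\arg u| < m\}$ with $m \approx 0.506053 \approx 0.322163 \cdot \pi/2$, and since the strongly starlike class $\mathcal{SS}^*(\beta)$ corresponds precisely to $zf'(z)/f(z)$ lying in the sector $|\arg u| < \beta\pi/2$ (equivalently $zf'/f \prec ((1+z)/(1-z))^\beta$), any $f$ with $zf'(z)/f(z) \prec \cosh\sqrt z$ automatically has its quotient in that sector, hence $f \in \mathcal{SS}^*(\beta)$ with $\beta = m/(\pi/2) \approx 0.322163$. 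I would state this directly, citing Lemma \ref{t2-10}(ii) and the figure reference.

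Part (viii), $\mathcal{S}^*_\varrho \subset \mathcal{ST}_p(\gamma)$ for $\gamma \ge \gamma_0 \approx 0.0654238$, is the one genuinely computational piece and the main obstacle. The class $\mathcal{ST}_p(\gamma)$ is defined by $\operatorname{Re}(zf'/f) + \gamma > |zf'/f - \gamma|$, i.e. $|u - \gamma| - \operatorname{Re} u < \gamma$ for $u = zf'(z)/f(z)$; so it suffices to show this holds for every $u \in \Omega_\varrho$, and by the maximum-type behavior it suffices to check it on the boundary curve $u = \varrho(e^{it})$. Writing $u = X + iY$ with $X = \cos(\sin\tau)\cosh(\cos\tau)$, $Y = \sin(\sin\tau)\sinh(\cos\tau)$, $\tau = t/2$, the condition $|u-\gamma|^2 < (X+\gamma)^2$ simplifies algebraically to $Y^2 < 4\gamma X$, i.e. $H(\tau) := Y^2/(4X) < \gamma$. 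The plan is then to maximize $H(\tau) = \sin^2(\sin\tau)\sinh^2(\cos\tau)\,/\,(4\cos(\sin\tau)\cosh(\cos\tau))$ over $\tau \in [0,\pi/2]$ (using symmetry to restrict to this interval), compute $H'(\tau)$, identify its critical points $\{0, \tilde\tau, \pi/2\}$ with $\tilde\tau \approx 0.832934$ the interior root of the displayed transcendental equation, and conclude $\max H = H(\tilde\tau) \approx 0.0654238 =: \gamma_0$. Finally I invoke the evident nesting $\mathcal{ST}_p(\gamma_1) \subset \mathcal{ST}_p(\gamma_2)$ for $\gamma_1 < \gamma_2$ to get the inclusion for all $\gamma \ge \gamma_0$. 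The delicate point is verifying that $\tilde\tau$ is indeed the unique interior critical point and that it is a maximum rather than a minimum, which I would settle by checking the sign of $H$ (or $H'$) at the endpoints and noting $H(0) = H(\pi/2) = 0$ while $H$ is positive in between, forcing the interior critical point to be the global max.
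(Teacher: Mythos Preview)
Your proposal is correct and follows essentially the same approach as the paper: parts (i)--(ii) and (iv)--(vii) by specializing Theorem \ref{t13} to $\sigma=1$, part (iii) from Lemma \ref{t2-10}(ii), and part (viii) via the explicit maximization of $H(\tau)=Y^2/(4X)$ on $[0,\pi/2]$. One small correction: your worry in part (vi) is unfounded, since $\pi/3\approx 1.0472>1$, so $\sigma=1$ does lie in $[-\pi/3,\pi/3]$ and Theorem \ref{t13}(v) applies directly; your detour through the underlying inequality $2^{-s}\le c_0$ is therefore unnecessary (though still valid). Your added observation that $H(0)=H(\pi/2)=0$ with $H>0$ in between, forcing the interior critical point to be the global maximum, is a useful clarification that the paper leaves implicit.
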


\begin{figure}
\begin{tabular}{c}
\includegraphics[scale=0.34]{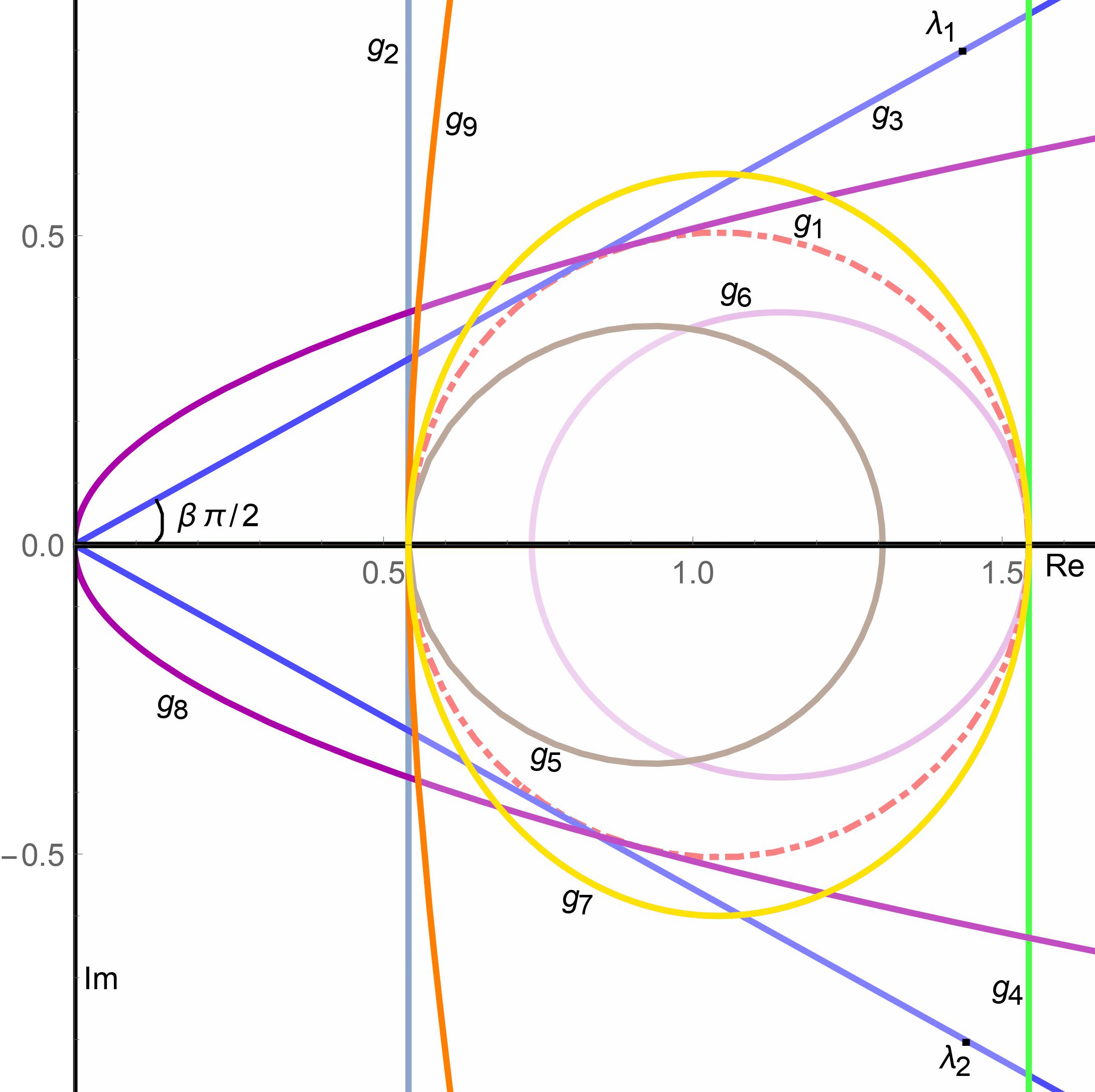}
\end{tabular}
\begin{tabular}{l}
\parbox{0.25\linewidth}{
{\bf \underline{Legend} - }\\
$g_{1}: \varrho(z) =\cosh\sqrt{z} $}\\ \vspace{0.15cm}  
$g_{2}:  \operatorname{Re} u = c_{0}$\\ \vspace{0.15cm} 
$g_{3}: |\arg u| = $ $\beta \pi/2$ \\ \vspace{0.15cm}$\arg \lambda_{1}=-\arg \lambda_{2}=\beta \pi/2$  \\ \vspace{0.15cm} $\beta \approx 0.322163$  \\ \vspace{0.15cm}
$g_{4}: \operatorname{Re} u = c_{1} $\\ \vspace{0.15cm} 
$g_{5}: \sqrt{1 + (1 - c_{0}^{2} ) z}$ \\ \vspace{0.15cm} 
$g_{6}: \operatorname{Re} u = \frac{c_{1}}{c_{1}-1}|u - 1|$\\ \vspace{0.15cm} 
$g_{7}: \frac{(\operatorname{Re} u - \frac{c_{0}+c_{1}}{2})^{2}}{(\frac{c_{1}-c_{0}}{2})^{2}}+\frac{(\operatorname{Im} u)^{2}}{ (c_{2})^{2}},$ \\ \vspace{0.15cm} $c_{2}=0.65$ \\ \vspace{0.15cm} 
%$b_{0} := \cosh 1 + \cos 1 ,$\\
%$b_{1} := \cosh 1 - \cos 1, 
$g_{8}: \operatorname{Re} u + \gamma=|u - \gamma|,$ \\  $\gamma \approx 0.0654238$ \\ \vspace{0.15cm} 
$g_{9}: \frac{1}{(1-z)^{s_{0}}},$ $s_{0}= \frac{\log c_{0}^{-1}}{\log 2}$
\end{tabular}
\caption{Inclusion graphs in context of Corollary \ref{t12} associated with  $\varrho(z).$} 
\label{f2}
\end{figure}
\begin{remark}
Fig. \ref{f2}  displays various inclusion relations related to the region $\Omega_{\varrho}:=\Omega_{\varrho_{1}}.$  A vertical ellipse enclosing the region $\Omega_{\varrho}$ is $(x-x_{2})^{2}/a_{2}^{2}+y^{2}/b_{2}^{2}=1,$ where $x_{2}=c_{0}/2,$ $a_{2}=c_{1}/2$ and $c_{2}\geq \max \operatorname{Im}\varrho(z).$  For visual purposes we illustrate this ellipse ($g_{7}$) for $c_{2}=0.65.$  Fig. \ref{f2} depicts the sharpness of inclusion results in Corollary \ref{t12}.
\end{remark}

Let $\mathcal{P}_{n}(\alpha)$ denote the class of functions $p(z)$ of the type $p(z)=1+p_{n}z^{n}+p_{n+1}z^{n+1}+\ldots$ such that $\operatorname{Re} p(z) > \alpha$ $(0\leq \alpha <1).$ Clearly the class $\mathcal{P}_{n}(\alpha)\subset \mathcal{P}_{n}$ and assume $\mathcal{P}_{n}:=\mathcal{P}_{n}(0).$ If a function $p(z)$ of the form $p(z)=1+p_{n}z^{n}+p_{n
+1}z^{n+1}+\cdots$ satisfies $p(z)\prec (1+A z)/( 1 + B z ),$ for   $A\neq B$ and $|B|\leq 1,$ then $p\in \mathcal{P}_{n}[A,B].$ We state a few lemmas in connection with these classes.

\begin{lemma} \label{l11}  \cite{V.Ravi}
If $p\in\mathcal{P}_{n}[A,B],$ then for $|z|=r$
\[\left|p(z)-\frac{1-A B r^{2n}}{1-B^{2}r^{2n}}\right| \leq \frac{|A-B|r^{n}}{1-B^{2}r^{2n}}.\]
Particularly, if $p\in\mathcal{P}_{n}(\alpha),$ then 
\[\left|p(z)-\frac{1+(1-2 \alpha)r^{2n}}{1-r^{2n}}\right|\leq \frac{2(1-\alpha)r^{n}}{1-r^{2n}}.\]	
\end{lemma}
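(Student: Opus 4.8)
The statement to prove is Lemma~\ref{l11}, which is attributed to \cite{V.Ravi}, so the proof is essentially a citation-backed verification. Still, here is how I would reconstruct it from first principles.

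\textbf{Plan of proof.} The hypothesis is that $p(z) = 1 + p_n z^n + p_{n+1}z^{n+1}+\cdots \prec (1+Az)/(1+Bz)$. The first step is to exploit the subordination to write $p(z) = (1+Aw(z))/(1+Bw(z))$ for some Schwarz function $w$ with $w(0)=0$ and $|w(z)|\le |z|$ on $\mathbb{D}$. Because the first $n-1$ Taylor coefficients of $p$ past the constant term vanish, the standard refinement of the Schwarz lemma (Schwarz--Pick type, applied to $w(z)/z^{n-1}$ or directly to the structure of $w$) gives the sharper bound $|w(z)| \le |z|^n$ for $|z| = r$; more precisely one uses that $\omega(z) := w(z)$ maps $\mathbb{D}$ into $\mathbb{D}$ with a zero of order at least $n$ at the origin... actually the cleanest route is: the map $z \mapsto (1+Az)/(1+Bz)$ is a disc automorphism-type M\"obius map, so $p(\mathbb D)$ is contained in the image of $|z|<1$, and restricting to the disc $|z|\le r$ with the higher-order vanishing gives that $p(z)$ lies in the image of the subdisc $|\zeta| \le r^n$ under $\zeta \mapsto (1+A\zeta)/(1+B\zeta)$.

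\textbf{Key steps in order.} (1) Reduce to showing that the M\"obius image $T(\{|\zeta|\le \rho\})$, where $T(\zeta) = (1+A\zeta)/(1+B\zeta)$ and $\rho = r^n$, is exactly the closed disc with the asserted center and radius. (2) Compute this image: a M\"obius transformation sends circles to circles, so $T(\{|\zeta| = \rho\})$ is a circle; find its center and radius by evaluating $T$ at the two diametrically opposite points $\zeta = \pm\rho$ (chosen along the direction that makes $T(\pm\rho)$ the endpoints of a diameter — one checks that for $T$ the points $\zeta = \rho$ and $\zeta = -\rho$ do map to diametrically opposite points of the image circle because of the symmetry of $T$ restricted to the real axis when $A,B$ are handled via $|B|\le 1$; in general one computes center $= (1 - AB\rho^2)/(1-B^2\rho^2)$ and radius $= |A-B|\rho/(1-B^2\rho^2)$ directly). (3) Combine with the higher-order Schwarz bound $|w(z)| \le r^n$ to conclude $p(z) \in T(\{|\zeta|\le r^n\})$, which is precisely the first displayed inequality. (4) For the particular case, substitute $A = 1-2\alpha$, $B = -1$ into the general bound and simplify: center becomes $(1+(1-2\alpha)r^{2n})/(1-r^{2n})$ and radius becomes $2(1-\alpha)r^n/(1-r^{2n})$.

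\textbf{Main obstacle.} The only genuinely non-routine point is justifying the sharpened Schwarz estimate $|w(z)| \le |z|^n$ coming from the vanishing of the intermediate Taylor coefficients of $p$: one must argue that if $p - 1$ has a zero of order $n$ at the origin then so does $w$ (this follows since $T$ is a biholomorphism near $\zeta = 0$ with $T(0) = 1$, $T'(0) = A - B \ne 0$, hence $w = T^{-1}\circ p$ and $w$ inherits the order-$n$ vanishing), and then apply the Schwarz lemma to $w(z)/z^{n-1}$ — or more simply note $w(z)/z^n$ is bounded-by-reasoning via $g(z) = w(z^{1/n})$ when $n \mid$ the structure, but the robust argument is just: $h(z) := w(z)/z^{n}$ is analytic (removable singularity at $0$) and, being the quotient, satisfies $|h(z)| \le 1$ on $\mathbb{D}$ by a maximum-principle/boundary argument, i.e. $\limsup_{|z|\to 1}|w(z)/z^n| \le 1$. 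Everything after that is the elementary M\"obius-image-of-a-disc computation, which is standard. Since the result is quoted verbatim from \cite{V.Ravi}, in the paper itself it would suffice to cite that reference, but the reconstruction above is the argument behind it.
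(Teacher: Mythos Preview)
The paper does not supply a proof of this lemma at all; it is stated with the citation \cite{V.Ravi} and used as a black box. Your reconstruction is correct and is exactly the standard argument behind the cited result: pull back through the M\"obius map to get a Schwarz function $w$ with an order-$n$ zero at the origin, apply the higher-order Schwarz lemma to obtain $|w(z)|\le |z|^n$, and then compute the M\"obius image of the disc $|\zeta|\le r^n$. The only place your write-up is slightly loose is the justification of $|w(z)/z^n|\le 1$: rather than a boundary $\limsup$, the clean version fixes $z$, applies the maximum modulus principle on $|z|\le\rho$ (where $|w/z^n|\le 1/\rho^n$ on $|z|=\rho$), and then lets $\rho\to 1^-$. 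With that tightened, your argument is complete and matches what the citation provides.
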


\begin{lemma}\label{l10}\cite{Shah}
If $p\in\mathcal{P}_{n}(\alpha),$ then for $|z|=r$
\[\left|\frac{zp'(z)}{p(z)}\right|\leq \frac{2(1-\alpha)nr^{n}}{(1-r^{n})(1+(1-2\alpha) r^{n})}. \]	
\end{lemma}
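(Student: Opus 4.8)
The plan is to pass to an associated Schwarz function and reduce the inequality to an elementary one‑variable estimate. Since $p\in\mathcal{P}_{n}(\alpha)$, the Möbius transformation $w\mapsto(w-1)/(w+1-2\alpha)$ maps the half‑plane $\{\operatorname{Re}w>\alpha\}$ conformally onto $\mathbb{D}$ and sends $1$ to $0$; hence $\omega:=(p-1)/(p+1-2\alpha)$ is analytic on $\mathbb{D}$ with $\omega(0)=0$ and $|\omega|<1$, and because $p(z)=1+p_{n}z^{n}+\cdots$ has no terms of degrees $1,\dots,n-1$, the function $\omega$ vanishes to order at least $n$ at the origin. Inverting, $p=\bigl(1+(1-2\alpha)\omega\bigr)/(1-\omega)$, and logarithmic differentiation (legitimate, as $\operatorname{Re}p>\alpha\ge0$ keeps $p$ zero‑free) gives
\[
\frac{zp'(z)}{p(z)}=\frac{2(1-\alpha)\,z\omega'(z)}{\bigl(1-\omega(z)\bigr)\bigl(1+(1-2\alpha)\omega(z)\bigr)}.
\]

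Next I would estimate the two factors on $|z|=r$. Writing $\omega(z)=z^{n}g(z)$ with $g$ analytic and $|g|\le1$ (an $n$‑fold application of the Schwarz lemma), one gets $|\omega(z)|\le r^{n}$ and, by the Schwarz--Pick inequality for $g$, $|g'(z)|\le(1-|g(z)|^{2})/(1-r^{2})$; with $t:=|g(z)|\in[0,1]$ this yields
\[
|z\omega'(z)|=\bigl|nz^{n}g(z)+z^{n+1}g'(z)\bigr|\le nr^{n}t+\frac{r^{n+1}(1-t^{2})}{1-r^{2}}.
\]
For the denominator, an elementary minimization shows that for $\alpha\in[0,1)$ the quantity $|(1-w)(1+(1-2\alpha)w)|$ attains its minimum over each circle $|w|=\rho<1$ at $w=\rho$, with value $(1-\rho)(1+(1-2\alpha)\rho)$; taking $\rho=|\omega(z)|\le r^{n}t$ gives $|(1-\omega)(1+(1-2\alpha)\omega)|\ge(1-r^{n}t)(1+(1-2\alpha)r^{n}t)$. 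Combining, $\bigl|zp'(z)/p(z)\bigr|\le\Psi(t)$, where
\[
\Psi(t)=\frac{2(1-\alpha)\Bigl(nr^{n}t+\dfrac{r^{n+1}(1-t^{2})}{1-r^{2}}\Bigr)}{(1-r^{n}t)\bigl(1+(1-2\alpha)r^{n}t\bigr)},
\]
and $\Psi(1)$ is precisely the asserted bound.

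It thus remains to prove $\Psi(t)\le\Psi(1)$ for $t\in[0,1]$. Clearing the (positive) denominators and simplifying, the difference $\Psi(1)-\Psi(t)$ carries a factor $1-t$, and after removing it the inequality becomes \emph{affine} in $t$, so it suffices to check $t=0$ and $t=1$. The case $t=0$ reduces to $\tfrac{2r(1-r^{n})}{1-r^{2}}\le n$, immediate from $\tfrac{1-r^{n}}{1-r}\le n$ and $\tfrac{2r}{1+r}\le1$. The case $t=1$ reduces to
\[
\frac{2r(1-r^{n})\bigl(1+(1-2\alpha)r^{n}\bigr)}{1-r^{2}}\le n\bigl(1+(1-2\alpha)r^{2n}\bigr),
\]
which I would obtain by writing $1+(1-2\alpha)r^{n}$ as a convex combination of $1$ and $1+r^{n}$ when $1-2\alpha\ge0$ (resp. of $1$ and $1-r^{n}$ when $1-2\alpha<0$) and invoking, besides $\tfrac{2r(1-r^{n})}{1-r^{2}}\le n$, the inequality $\tfrac{2r(1-r^{2n})}{1-r^{2}}\le n(1+r^{2n})$ — the latter being the sum over $k=0,\dots,n-1$ of the trivial estimates $(1-r^{2k+1})(1-r^{2n-2k-1})\ge0$.

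I expect the last step, the estimate $\Psi(t)\le\Psi(1)$, to be where the real work lies. First, the estimates must be sharp enough: the crude bound $|z\omega'(z)|\le r(1-|\omega(z)|^{2})/(1-r^{2})$ coming from a single Schwarz--Pick inequality is \emph{insufficient} for $n\ge2$, which is why the factorization $\omega=z^{n}g$ is essential. Second, one must control the resulting rational inequality uniformly over $\alpha\in[0,1)$, i.e. for both signs of $1-2\alpha$ in the denominator; the convexity‑in‑$\alpha$ device above does this in one stroke. Finally, equality propagates through the whole argument for $p(z)=\bigl(1+(1-2\alpha)\varepsilon z^{n}\bigr)/(1-\varepsilon z^{n})$ with $|\varepsilon|=1$, evaluated at the points where $\varepsilon z^{n}=r^{n}$, so the bound is sharp.
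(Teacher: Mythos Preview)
The paper does not prove this lemma at all: it is simply quoted, with attribution, from Shah's paper \cite{Shah}. So there is no ``paper's own proof'' to compare against; your proposal is a self-contained argument supplying what the paper takes for granted.

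Your argument is correct. A couple of remarks on the points you pass over quickly. First, the claim that $|(1-w)(1+(1-2\alpha)w)|$ is minimised on $|w|=\rho$ at $w=\rho$ is the only place where one really has to compute: writing $\beta=1-2\alpha$ and $x=\cos\theta$, the squared modulus is a quadratic $F(x)$ with leading coefficient $-4\beta\rho^{2}$. For $\beta\ge0$ this is concave, so the minimum on $[-1,1]$ is at an endpoint, and $F(-1)-F(1)=4\rho(1-\beta)(1-\beta\rho^{2})\ge0$ forces it to be $x=1$. For $\beta\in(-1,0)$ the quadratic is convex with vertex at
\[
x^{*}=\frac{(\beta-1)(1-\beta\rho^{2})}{4\beta\rho}=\frac{(1+|\beta|)(1+|\beta|\rho^{2})}{4|\beta|\rho}\ge\frac{2\sqrt{|\beta|}\cdot 2\sqrt{|\beta|}\rho}{4|\beta|\rho}=1
\]
by AM--GM, so $F$ is decreasing on $[-1,1]$ and again the minimum is at $x=1$. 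Second, your reduction ``after removing the factor $1-t$ the inequality becomes affine in $t$'' is justified because $N(1)D(t)-N(t)D(1)$ is a quadratic in $t$ vanishing at $t=1$; your checks at $t=0$ and $t=1$ (the latter via the derivative) then suffice, and your two auxiliary inequalities $\tfrac{2r(1-r^{n})}{1-r^{2}}\le n$ and $\tfrac{2r(1-r^{2n})}{1-r^{2}}\le n(1+r^{2n})$ are exactly what is needed. The sharpness observation with $p(z)=(1+(1-2\alpha)\varepsilon z^{n})/(1-\varepsilon z^{n})$ is also correct.
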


\begin{theorem}\label{t8}
Let $p(z)=(1+Az)/(1+Bz),$ where $-1 < B< A \leq 1,$ then $p(z) \prec\cosh   \sqrt{z},$ if and only if 
\begin{align}\label{l6} 
A\leq \left\{\begin{array}{cl}
1-(1-B)c_{0}  & \text{if} \hskip 0.2cm 2(1-AB) \leq (c_{0}+c_{1})(1-B^{2})\\
 (1+B)c_{1} - 1 & \text{if} \hskip 0.2cm 2(1-AB) \geq (c_{0}+c_{1})(1-B^{2}).
 \end{array}\right.
\end{align}
\end{theorem}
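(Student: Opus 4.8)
The plan is to exploit that $p$ is a M\"obius transformation, so that $p(\mathbb{D})$ is a Euclidean disc, and then to invoke Lemma~\ref{l5} (equivalently Theorem~\ref{t14}), which identifies the largest disc centred at a real point that fits inside $\Omega_{\varrho}$. First I would solve $w=(1+Az)/(1+Bz)$ for $z$, getting $z=(w-1)/(A-Bw)$, so that $|z|<1$ is equivalent to $|w-1|^{2}<|A-Bw|^{2}$; expanding and completing the square (using $-1<B<A\le 1$, hence $1-B^{2}>0$ and $A-B>0$; this can also be read off Lemma~\ref{l11} with $n=1$) yields
\[
p(\mathbb{D})=\{w:|w-c|<\rho\},\qquad c=\frac{1-AB}{1-B^{2}},\quad \rho=\frac{A-B}{1-B^{2}}.
\]
Since $p(0)=1=\varrho(0)$ and $\varrho$ is univalent on $\mathbb{D}$, we have $p\prec\varrho$ if and only if $p(\mathbb{D})\subset\Omega_{\varrho}$, that is, if and only if the open disc $D(c,\rho)$ lies in $\Omega_{\varrho}$. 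I would also record the identities $c-\rho=(1-A)/(1-B)=p(-1)$ and $c+\rho=(1+A)/(1+B)=p(1)$, the endpoints of the real diameter of $D(c,\rho)$.

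Next I would reduce this to a radius comparison. Containment $D(c,\rho)\subset\Omega_{\varrho}$ forces the centre $c$ to lie in $\Omega_{\varrho}$, so $c_{0}<c<c_{1}$ by Lemma~\ref{t2-10}(iii); moreover the proof of Lemma~\ref{l5} with $\sigma=1$ shows that $r_{c}$ of \eqref{r1} equals $\operatorname{dist}(c,\partial\Omega_{\varrho})$. Hence the open disc of radius $r_{c}$ lies in the open region $\Omega_{\varrho}$, while any strictly larger concentric disc meets $\partial\Omega_{\varrho}$; therefore $D(c,\rho)\subset\Omega_{\varrho}$ if and only if $\rho\le r_{c}$. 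After clearing the positive factor $1-B^{2}$, the dichotomy $c\lessgtr(c_{0}+c_{1})/2$ appearing in \eqref{r1} becomes precisely the dichotomy $2(1-AB)\lessgtr(c_{0}+c_{1})(1-B^{2})$ appearing in \eqref{l6}, and at the common value $c=(c_{0}+c_{1})/2$ both branches of \eqref{r1} give $r_{c}=(c_{1}-c_{0})/2$, so the two cases are exhaustive and consistent.

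Finally I would translate $\rho\le r_{c}$ into the stated bound on $A$. If $c\le(c_{0}+c_{1})/2$ then $r_{c}=c-c_{0}$, so $\rho\le r_{c}$ says $c-\rho\ge c_{0}$, i.e. $(1-A)/(1-B)\ge c_{0}$, i.e. $A\le 1-(1-B)c_{0}$ after multiplying by $1-B>0$. If $c\ge(c_{0}+c_{1})/2$ then $r_{c}=c_{1}-c$, so $\rho\le r_{c}$ says $c+\rho\le c_{1}$, i.e. $(1+A)/(1+B)\le c_{1}$, i.e. $A\le(1+B)c_{1}-1$. This gives \eqref{l6}. Conversely, each inequality in \eqref{l6} together with its case hypothesis already forces $c_{0}<c<c_{1}$ (from $c-\rho\ge c_{0}$, respectively $c+\rho\le c_{1}$, together with $\rho>0$), so the equivalences above run in both directions and the theorem follows.

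The one genuinely delicate point is that the equivalence --- not merely the sufficiency --- hinges on $r_{c}$ being the \emph{maximal} admissible radius, which is exactly what the case analysis in the proof of Lemma~\ref{l5} establishes by identifying $r_{\sigma c}=\min_{\tau}\sqrt{G_{c}(\tau)}$ with the distance from $c$ to the boundary curve $\Gamma$. Everything else --- the algebraic manipulations and the bookkeeping distinguishing open discs from their boundaries --- is routine.
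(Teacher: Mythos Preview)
Your proof is correct and follows essentially the same approach as the paper's: both identify $p(\mathbb{D})$ as the disc with centre $c=(1-AB)/(1-B^{2})$ and radius $\rho=(A-B)/(1-B^{2})$ (the paper via Lemma~\ref{l11}, you by direct inversion), and then invoke Theorem~\ref{t14} to compare $\rho$ with $r_{c}$ according to the position of $c$ relative to $(c_{0}+c_{1})/2$. Your write-up is in fact more careful than the paper's on two points---you make explicit the endpoint identities $c\pm\rho=p(\pm1)$ that streamline the algebra, and you flag that the ``only if'' direction rests on $r_{c}$ being the actual distance from $c$ to $\partial\Omega_{\varrho}$ (established in the proof of Lemma~\ref{l5}), not merely a lower bound---whereas the paper's proof leaves these implicit.
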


\begin{proof}
Lemma \ref{l11} shows that the $p(z)=(1+Az)/(1+Bz),$ maps $\mathbb{D}$ onto the disc
\[\left|p(z)-\frac{1-AB}{1-B^{2}}\right|\leq \frac{A-B}{1-B^{2}}, \quad -1 < B < A \leq 1.\]
By Theorem \ref{t14}, $p(z)\prec\cosh  \sqrt{z}$ if and only if the above disc lies within $\Omega_{\varrho}.$ 
Conditions in \eqref{l6} gives $(1+A)\leq (1+B)c_{1},$ provided $2(1-AB) \geq (c_{0}+c_{1})(1-B^{2})$ holds. Infact $(A-B)/(1-B^{2}) \leq c_{1} - (1 - A B) / (1 - B^{2})$ leads  to $(A-B)/(1-B^{2}) \leq c_{1} - c $ provided $2 c\geq c_{0}+c_{1}$ where  $c=(1 - A B)/(1 - B^{2}).$  Also from \eqref{l6},
$(1-A)\geq (1-B)c_{0}$ whenever $2(1-AB)\leq(c_{0}+c_{1})(1-B^{2}).$ Equivalently, $(A - B)/(1 - B^{2}) \leq c - c_{0}$ whenever $2c\leq c_{0}+c_{1}.$ Thus $p(z)$ lies in $|u-c|<r_{c},$ where $r_{c}$ is given by \eqref{r1}.  
\end{proof}

\begin{corollary}
If conditions on $A,B$ are as given in Theorem \ref{t8}, then $\mathcal{S}^{*}[A,B]\subset\mathcal{S}^{*}_{\varrho}.$
\end{corollary}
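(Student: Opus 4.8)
The plan is to show this is an immediate consequence of Theorem \ref{t8} together with the definitions of the two classes. First I would recall that, by definition, $f \in \mathcal{S}^{*}[A,B]$ means precisely that $zf'(z)/f(z) \prec (1+Az)/(1+Bz)$, and that $f \in \mathcal{S}^{*}_{\varrho}$ means $zf'(z)/f(z) \prec \cosh\sqrt{z}$. So the containment $\mathcal{S}^{*}[A,B] \subset \mathcal{S}^{*}_{\varrho}$ will follow at once if I can verify the subordination chain $(1+Az)/(1+Bz) \prec \cosh\sqrt{z}$ under the stated hypotheses on $A$ and $B$, and then invoke transitivity of subordination.

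The key step, then, is exactly the transitivity argument. Let $f \in \mathcal{S}^{*}[A,B]$ and set $p(z) = (1+Az)/(1+Bz)$. Since $-1 < B < A \leq 1$ and $A, B$ satisfy condition \eqref{l6} of Theorem \ref{t8}, that theorem gives $p(z) \prec \cosh\sqrt{z}$. On the other hand, $f \in \mathcal{S}^{*}[A,B]$ gives $zf'(z)/f(z) \prec p(z)$. Because $p$ and $\cosh\sqrt{z}$ are both univalent (indeed $\cosh\sqrt{z} = \varrho(z)$ is univalent on $\mathbb{D}$, as noted in the introduction, and $(1+Az)/(1+Bz)$ is a M\"obius transformation hence univalent), subordination is transitive here, so $zf'(z)/f(z) \prec \cosh\sqrt{z}$, i.e. $f \in \mathcal{S}^{*}_{\varrho}$. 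Since $f$ was arbitrary in $\mathcal{S}^{*}[A,B]$, the inclusion follows.

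I do not anticipate a genuine obstacle: the corollary is purely a matter of chaining two subordinations, and all the analytic work — the geometric characterization of when the Janowski disc $|u - (1-AB)/(1-B^{2})| \leq (A-B)/(1-B^{2})$ sits inside $\Omega_{\varrho}$, via the maximal inscribed disc of Theorem \ref{t14}/Remark \ref{l12} — has already been carried out in the proof of Theorem \ref{t8}. The one point worth a sentence of care is to note explicitly that subordination through a univalent middle function is transitive (this is the standard fact that if $g \prec h$ and $h \prec \phi$ with $\phi$ univalent, then $g \prec \phi$), so that the composition of subordinating Schwarz functions is again a Schwarz function; this legitimizes passing from $zf'(z)/f(z) \prec p$ and $p \prec \varrho$ to $zf'(z)/f(z) \prec \varrho$. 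No further estimates are needed.
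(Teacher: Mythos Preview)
Your proposal is correct and matches the paper's intent: the corollary is stated without proof precisely because it follows immediately from Theorem~\ref{t8} together with the definitions of $\mathcal{S}^{*}[A,B]$ and $\mathcal{S}^{*}_{\varrho}$ via transitivity of subordination, exactly as you spell out. One small remark: transitivity here does not even require univalence of the intermediate function, since the composition of two Schwarz functions is again a Schwarz function.
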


\section{Radius Problems}
	%Let $X$ and $Y$ be two subclasses of $\mathcal{A}.$ The radius problems corresponding to these classes deals with determining $R,$ the least upper bound of all numbers $r$ such that for each $f\in Y,$ $r^{-1}f(rz)\in X$  for each $0<r\leq R.$ The number $R=:\mathcal{R}_{X}(Y)$ is called the $X$-radius for the class $Y.$ 
%Similarly $Y$-radius for the class $X$ can be defined.  If there exists a function $F\in Y$ such that $r^{-1}F(rz)\notin X$ for $r>R,$ then $F$ is called the extremal function, thus $R$ is called the sharp $X$-radius for the class $Y.$
Radius problems have been an active area of research in geometric function theory. Some of the pioneering work in this direction have been discussed by several authors, see \cite{Aouf n Sokol, Mendiratta n Nagpal, V.Ravi, Hussain & Darus}. For further
development on radius problems of analytic functions, readers may refer
to \cite{Baricz & Obradovic(2013),  Kumar & Sahoo(2021),Ponnusamy & Sahoo(2006),Ponnusamy(2014)}. Motivated by the aforestated work, we derive radius results for the following classes %involving some subclasses of $\mathcal{A}$ and $\mathcal{A}_{n}$ defined below %Further we define the classes $\mathcal{S}^{*}_{n}(\varrho),$ $\mathcal{S}^{*}_{n}[A,B]$   $(-1\leq B < A \leq 1)$ and $\mathcal{M}_{n}(\beta)$ as follows:
\[\mathcal{S}^{*}_{n}(\varrho)=\left\{f\in\mathcal{A}_{n}:\frac{zf'(z)}{f(z)}\prec \cosh  \sqrt{z} =:\varrho(z)\right\},\] 
\[\mathcal{S}^{*}_{n}[A,B]=\left\{f\in\mathcal{A}_{n}:\frac{zf'(z)}{f(z)}\prec \frac{1+A z}{1+B z}\right\}\] and \[\mathcal{M}_{n}(\beta)=\left\{f\in\mathcal{A}_{n}:\frac{zf'(z)}{f(z)}\prec\frac{1+(1 - 2\beta)z}{1-z}, \beta >1 \right\}.\] In the sequel we apply lemmas stated in Section 2, to obtain sharp $\mathcal{S}^{*}_{n}(\varrho)-$radius,  $\mathcal{S}^{*}_{n}[A,B]-$radius and $\mathcal{M}_{n} (\beta)-$radius  for the class $\mathcal{S}^{*}_{\varrho}.$ The following theorem is obtained from Lemma \ref{t2-10} and equations in \eqref{e21}.
\begin{theorem}
The class $\mathcal{S}^{*}_{\varrho}\subset \mathcal{M}(\beta)$ for $|z|<r_{\beta},$ where  
\begin{align*}
r_{\beta}=\left\{\begin{array}{cl}
  r(\beta),& 1<\beta < c_{1} \\
  1,& \beta\geq c_{1}.
 \end{array}\right.
\end{align*}
and $r(\beta)\in(0,1)$ is the smallest root of the equation $\cosh \sqrt{r} =\beta.$ Equality holds when $f(z)=\varphi_{\varrho}(z).$
\end{theorem}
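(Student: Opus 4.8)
The plan is to reduce the radius statement to the geometric inclusion furnished by Lemma~\ref{t2-10} together with the extremal bounds on $\operatorname{Re}\varrho$ and $|\varrho|$ recorded in \eqref{e21}. Recall that $f\in\mathcal{M}(\beta)$ means $zf'(z)/f(z)\prec (1+(2\beta-1)z)/(1+z)$ with $\beta>1$, and a standard computation shows that the image of $\mathbb{D}$ under $(1+(2\beta-1)z)/(1+z)$ is the half-plane $\{w:\operatorname{Re} w<\beta\}$; equivalently, $f\in\mathcal{M}(\beta)$ if and only if $\operatorname{Re}(zf'(z)/f(z))<\beta$ on $\mathbb{D}$. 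Hence, for $f\in\mathcal{S}^{*}_{\varrho}$ we must find the largest $r$ so that $|z|<r$ forces $\operatorname{Re}(zf'(z)/f(z))<\beta$.

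The key step is the observation that $p(z):=zf'(z)/f(z)\prec\varrho(z)=\cosh\sqrt{z}$, so that $p(\mathbb{D}_r)=p(\{|z|<r\})\subset\varrho(\overline{\mathbb{D}_r})$, and by the first identity in \eqref{e21} (applied with the roles of min/max, or rather its max-analogue) the maximum of $\operatorname{Re}\varrho$ on $|z|\le r$ equals $\varrho(\sqrt{r})=\cosh\sqrt{r}$, attained at $z=r$. Therefore $\operatorname{Re} p(z)\le \cosh\sqrt{r}$ for $|z|\le r$, and this bound is sharp. Consequently the containment $\mathcal{S}^{*}_{\varrho}\subset\mathcal{M}(\beta)$ on $|z|<r$ holds precisely when $\cosh\sqrt{r}\le\beta$. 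If $\beta\ge c_1=\cosh 1$, this is automatic for every $r\le 1$, giving $r_\beta=1$; if $1<\beta<c_1$, then since $r\mapsto\cosh\sqrt{r}$ is strictly increasing from $1$ at $r=0$ to $\cosh 1$ at $r=1$, the threshold is the unique solution $r(\beta)\in(0,1)$ of $\cosh\sqrt{r}=\beta$, which is what the theorem calls ``the smallest root'' (it is in fact the only root in $(0,1)$).

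For sharpness I would exhibit $f=\varphi_{\varrho}$ from \eqref{e22}, for which $zf'(z)/f(z)=\varrho(z)=\cosh\sqrt{z}$; evaluating at the boundary point $z=r_\beta$ (with $1<\beta<c_1$) gives $\operatorname{Re}(z f'(z)/f(z))=\cosh\sqrt{r_\beta}=\beta$, so the inequality $\operatorname{Re}(zf'(z)/f(z))<\beta$ fails at the rim $|z|=r_\beta$, proving that $r_\beta$ cannot be enlarged. This establishes both the inclusion and its optimality. The only mild subtlety — not really an obstacle — is justifying the monotonicity and range of $r\mapsto\cosh\sqrt{r}$ on $[0,1]$ to guarantee existence and uniqueness of $r(\beta)$, and invoking \eqref{e21} correctly to see that $\operatorname{Re}\varrho$ is maximized on the positive real radius; both are immediate from the power series $\cosh\sqrt{r}=1+r/2!+r^2/4!+\cdots$ having nonnegative coefficients.
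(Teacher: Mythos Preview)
Your argument is correct and is essentially the approach the paper intends: the paper does not give a detailed proof but simply cites Lemma~\ref{t2-10} and \eqref{e21}, which is exactly the combination you use (subordination principle to pass from $p\prec\varrho$ to $\operatorname{Re}p(z)\le\max_{|\zeta|\le r}\operatorname{Re}\varrho(\zeta)=\cosh\sqrt{r}$, then solve $\cosh\sqrt{r}=\beta$, with sharpness via $\varphi_{\varrho}$). One cosmetic remark: from $|w(z)|\le|z|$ you actually get $p(\mathbb{D}_r)\subset\varrho(\mathbb{D}_r)$ (no closure needed), but this does not affect the conclusion.
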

\begin{theorem}
Suppose $f\in\mathcal{S}^{*}_{\varrho},$
then $f(z)$ is starlike of order $\zeta,$ in $|z|<r_{\zeta},$ where  $r_{\zeta}<1$ is the least positive root of the equation $\cos  \sqrt{r}=\zeta.$ This radius result is sharp.
\end{theorem}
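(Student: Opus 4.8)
The plan is to reduce the statement to a pointwise inclusion problem on circles $|z|=r$ and then invoke the sharp bound for $\operatorname{Re}\varrho$ recorded in \eqref{e21} together with the inclusion $\Omega_{\varrho}\subset\{u:c_0<\operatorname{Re}u<c_1\}$ from Lemma \ref{t2-10}(iii). If $f\in\mathcal{S}^{*}_{\varrho}$, then by definition $zf'(z)/f(z)=\varrho(w(z))$ for some Schwarz function $w$, hence for $|z|=r$ the point $zf'(z)/f(z)$ lies in $\varrho(\{|\zeta|\le r\})=\varrho_{r}(\mathbb{D})$ where I write $\varrho_r(z):=\varrho(rz)$. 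Since $\varrho$ maps each subdisc $|\zeta|<r$ onto a region symmetric about the real axis, the first equation in \eqref{e21} gives
\[
\min_{|z|=r}\operatorname{Re}\frac{zf'(z)}{f(z)}\ \ge\ \min_{|\zeta|=r}\operatorname{Re}\varrho(\zeta)=\varrho(-r)=\cos\sqrt{r}.
\]
So $f$ is starlike of order $\zeta$ on $|z|<r$ as soon as $\cos\sqrt{r}\ge\zeta$, i.e. as soon as $r\le r_\zeta$ where $r_\zeta$ is the least positive root of $\cos\sqrt{r}=\zeta$; note that since $\cos\sqrt{r}$ decreases from $1$ at $r=0$ to $\cos 1=c_0$ at $r=1$, such a root exists in $(0,1)$ precisely when $c_0<\zeta<1$, which is the relevant range.

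Next I would verify that $r_\zeta$ cannot be enlarged, i.e. sharpness. The natural candidate is the extremal function $\varphi_{\varrho}(z)=\varphi_{\varrho_2}(z)$ from \eqref{e22}, which satisfies $z\varphi_\varrho'(z)/\varphi_\varrho(z)=\varrho(z)=\cosh\sqrt{z}$. Evaluating at the negative real point $z=-r_\zeta$ gives
\[
\frac{z\varphi_\varrho'(z)}{\varphi_\varrho(z)}\Big|_{z=-r_\zeta}=\cosh\sqrt{-r_\zeta}=\cos\sqrt{r_\zeta}=\zeta,
\]
so the real part of $z\varphi_\varrho'(z)/\varphi_\varrho(z)$ actually attains the value $\zeta$ on the circle $|z|=r_\zeta$; hence $\varphi_\varrho$ fails to be starlike of order $\zeta$ on any disc strictly larger than $|z|<r_\zeta$, proving the radius is best possible.

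I expect the only mildly delicate point to be the justification of the inequality $\min_{|\zeta|=r}\operatorname{Re}\varrho(\zeta)=\cos\sqrt{r}$ — but this is exactly the content of \eqref{e21} (specialized to $\sigma=1$), which in turn rests on the monotonicity analysis in Lemma \ref{l5}; so I may simply cite it. The remaining steps are routine: one should also observe that $\varrho$ is univalent so that the image of the subdisc is genuinely $\varrho(\{|\zeta|<r\})$, and that $\cos\sqrt{r}$ is strictly decreasing in $r$ on $(0,1)$, which guarantees that the "least positive root" is well defined and that the radius $r_\zeta$ is the exact threshold. No further lemmas are needed beyond what is already established in Section 2.
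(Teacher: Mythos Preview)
Your argument is correct and follows essentially the same route as the paper's own proof: write $zf'(z)/f(z)=\varrho(w(z))$ for a Schwarz function $w$, invoke \eqref{e21} (with $\sigma=1$) to obtain $\operatorname{Re}(zf'(z)/f(z))\ge\cos\sqrt{r}$ on $|z|=r$, and verify sharpness with the extremal function $\varphi_{\varrho}$ at $z=-r_{\zeta}$. Two minor remarks: (i) you reuse the symbol $\zeta$ for the dummy variable in $\min_{|\zeta|=r}$ while it is already the order of starlikeness in the statement, so rename one of them; (ii) since $w$ maps into the closed disc $|w|\le r$, the relevant minimum is over $|\,\cdot\,|\le r$, which you reduce to the boundary minimum via the monotonicity of $\cos\sqrt{r}$ that you note later---make that reduction explicit where you first use it.
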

\begin{proof}
As $f\in\mathcal{S}^{*}_{\varrho},$ then we have $zf'(z)= f(z) \cosh  \sqrt{w(z)},$ where $w(z)$ is a Schwarz function with $w(0)=0$ such that  for $-\pi\leq t\leq \pi,$ $w(z)=R e^{i t}.$ %Due to symmetricity of $\varrho(z)$ about the real axis, we choose $t\in[0,\pi].$ 
For each $R=|w(z)|\leq |z|=r<1,$ we have $\cos { \sqrt{R}}\geq \cos{ \sqrt{r}},$  and as a result of equations in \eqref{e21}
\[\operatorname{Re}\frac{zf'(z)}{f(z)} \geq \min_{\displaystyle_{|z|=r}} \operatorname{Re}\varrho{(w(z))}=\cos{ \sqrt{r}} \geq \zeta.\] If $s(r,\zeta):=\cos{\sqrt{r}}-\zeta,$ then there exist $r_{\zeta_{0}}<r_{\zeta_{1}}$ such that $s(r_{\zeta_{0}},\zeta)>0$ and $s(r_{\zeta_{1}},\zeta)<0,$ holds. Thus a least positive root $r_{\zeta}$ for the equation $s(r,\zeta)=0,$ will serve the purpose. In particular, at $z_{0}=-r,$ we have 
$\operatorname{Re}(z_{0}\tilde{f}'(z_{0})/\tilde{f}(z_{0}))=\cos{ \sqrt{r}}=\zeta,$ then function $\tilde{f}(z)=\varphi_{\varrho}(z)$ is the extremal function.  
\end{proof}

On replacing $\phi(z)=(1+(1-2\alpha)z)/(1-z)$ in the definition $\mathcal{C}(\phi)$ we get the well-known class of convex functions of order $\alpha$ $(0\leq \alpha < 1),$ denoted by $\mathcal{C}(\alpha).$ For $\alpha=0,$ it reduces to the well-known class of convex functions $\mathcal{C}.$ In Theorem \ref{e23}, we establish radius of convexity of order $\alpha$ for the class $\mathcal{S}^{*}_{\varrho}.$ 

\begin{theorem}\label{e23}
Let $f\in\mathcal{S}^{*}_{\varrho},$ then $f\in\mathcal{C}({\alpha}),$ where $\alpha\in[0,1),$ provided $|z|\leq r_{0},$ where $r_{0}\in [0,1)$ is the least positive root of the equation, 
$2  (1-r^{2})\cos  \sqrt{r}-\sqrt{r}\tan{ \sqrt{r}}=\alpha.$
\end{theorem}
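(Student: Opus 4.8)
The plan is to start from the characterization that $f\in\mathcal{S}^*_{\varrho}$ means $zf'(z)/f(z)=\varrho(w(z))=\cosh\sqrt{w(z)}$ for some Schwarz function $w$, and then differentiate logarithmically to obtain an expression for $1+zf''(z)/f'(z)$. Writing $p(z):=zf'(z)/f(z)$, the standard identity gives
\[
1+\frac{zf''(z)}{f'(z)}=p(z)+\frac{zp'(z)}{p(z)}.
\]
So I must bound $\operatorname{Re}\bigl(p(z)+zp'(z)/p(z)\bigr)$ from below on $|z|=r$ and force it to be at least $\alpha$. The first term is handled exactly as in the previous starlikeness-of-order-$\zeta$ theorem: by \eqref{e21}, $\operatorname{Re} p(z)\ge\min_{|z|=r}\operatorname{Re}\varrho(w(z))=\varrho_{1}(\sqrt{-r})=\cos\sqrt{r}$ (using $R=|w(z)|\le r$ and monotonicity of $\cos\sqrt{\,\cdot\,}$ on the relevant range).

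The second term is the one that needs real work. I would write $p(z)=\varrho(w(z))=\cosh\sqrt{w(z)}$ and compute $zp'(z)=\varrho'(w(z))\,zw'(z)$. Then
\[
\left|\frac{zp'(z)}{p(z)}\right|=\left|\frac{\varrho'(w(z))}{\varrho(w(z))}\right|\,|zw'(z)|.
\]
For the Schwarz-function factor I would invoke the Schwarz–Pick type estimate $|zw'(z)|\le \dfrac{|z|\bigl(1-|w(z)|^2/|z|^2\cdot|z|^2\bigr)}{\ldots}$ — more precisely the sharp bound $|w'(z)|\le (1-|w(z)|^2)/(1-|z|^2)$, which after multiplying by $|z|=r$ and maximizing over $|w|\le r$ gives the familiar $|zw'(z)|\le \dfrac{r(1-\rho^2)}{1-r^2}$ type control, ultimately $|zw'(z)|\le r/(1-r^2)$ in the worst case is too crude; instead one keeps $|zw'(z)|$ bounded in terms of $|w(z)|=R$ and then estimates $|\varrho'(\cdot)/\varrho(\cdot)|$ along $|w|=R$. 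The worst case for the whole product, by the symmetry in \eqref{e21}, occurs at $w=-r$ (equivalently $z=-r$, $w(z)=z$), where one computes directly
\[
\left.\frac{\varrho'(w)\,w}{\varrho(w)}\right|_{w=-r}=\frac{-r\cdot\frac{1}{2\sqrt{-r}}\sinh\sqrt{-r}}{\cosh\sqrt{-r}}=\frac{\sqrt{r}}{2}\tan\sqrt{r},
\]
after simplifying with $\sqrt{-r}=i\sqrt{r}$, $\sinh(i\sqrt r)=i\sin\sqrt r$, $\cosh(i\sqrt r)=\cos\sqrt r$. Combining, $\operatorname{Re}\bigl(zp'(z)/p(z)\bigr)\ge -\dfrac{\sqrt r\tan\sqrt r}{2(1-r^2)}$ after folding in the Schwarz-Pick factor $1/(1-r^2)$. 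Hence
\[
\operatorname{Re}\!\left(1+\frac{zf''(z)}{f'(z)}\right)\ \ge\ \cos\sqrt{r}-\frac{\sqrt{r}\,\tan\sqrt{r}}{2(1-r^2)}\ =:\ \frac{1}{2(1-r^2)}\Bigl(2(1-r^2)\cos\sqrt r-\sqrt r\tan\sqrt r\Bigr)\cdot\frac{1}{?}
\]
and setting this $\ge\alpha$ isolates the equation $2(1-r^2)\cos\sqrt r-\sqrt r\tan\sqrt r=\alpha$ (up to the normalization matching the stated form), whose least positive root in $[0,1)$ is the claimed $r_0$; one checks the left side is positive near $0$ and eventually drops below $\alpha$, guaranteeing a root.

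The main obstacle I anticipate is justifying rigorously that the minimum of $\operatorname{Re}\bigl(p(z)+zp'(z)/p(z)\bigr)$ over all admissible Schwarz functions $w$ and all $|z|=r$ is attained at the ``real boundary point'' $z=-r$ with $w(z)=z$ — i.e. decoupling the two terms and simultaneously extremizing them is not automatic, since the $w$ that minimizes $\operatorname{Re} p$ need not minimize $\operatorname{Re}(zp'/p)$. The clean way around this is the Ma–Minda subordination machinery: use that $zp'(z)/p(z)$ and $p(z)$ can be estimated separately via the disc-containment lemmas (Lemma \ref{l5}, Theorem \ref{t14}) together with the sharp functional bound $|zw'(z)|\le(|z|^2-|w(z)|^2)/(1-|z|^2)\cdot\ldots$, and then verify sharpness by exhibiting $\varphi_{\varrho}(z)$ (for which $w(z)=z$) as the extremal function, so the inequality chain is tight exactly at $z=-r_0$. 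I would close by noting that at $z=-r_0$ the extremal function $\varphi_{\varrho}$ gives equality in the defining equation $2(1-r^2)\cos\sqrt r-\sqrt r\tan\sqrt r=\alpha$, confirming the radius cannot be enlarged.
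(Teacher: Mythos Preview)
Your approach is essentially the paper's: logarithmically differentiate $zf'(z)/f(z)=\cosh\sqrt{w(z)}$, bound $\operatorname{Re}\bigl(zf'/f\bigr)\ge\cos\sqrt r$ via \eqref{e21}, and control the correction term $zw'(z)\tanh\sqrt{w(z)}/(2\sqrt{w(z)})$ using Schwarz--Pick together with $|\tanh\sqrt{w}/\sqrt{w}|\le\tan\sqrt{|w|}/\sqrt{|w|}\le\tan\sqrt r/\sqrt r$ (the paper states this passage from \eqref{e8} to \eqref{e11} without further detail). Your worry about simultaneously extremizing the two pieces is unnecessary, since the theorem does not assert sharpness---the decoupled bounds already yield the claimed radius; and your hesitation about the final normalization is well-founded, since setting $\cos\sqrt r-\sqrt r\tan\sqrt r/(2(1-r^2))=\alpha$ gives $2(1-r^2)\cos\sqrt r-\sqrt r\tan\sqrt r=2(1-r^2)\alpha$, which differs from the paper's displayed equation by the same factor.
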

\begin{proof}
As $f\in\mathcal{S}^{*}_{\varrho},$ there exists a Schwarz function $w(z)$  such that  $w(0)=0$ and 
\begin{equation} \label{e57}
\frac{zf'(z)}{f(z)}= \cosh  \sqrt{w(z)}.	
\end{equation}
On logarithmically differentiating \eqref{e57} and applying triangle inequality, we deduce
\begin{align} \label{e9}
\operatorname {Re} \left(1+\frac{zf''(z)}{f'(z)}\right)&  =   \operatorname{Re}\frac{zf'(z)}{f(z)} + \operatorname{Re} \left(\frac{z w'(z)\tanh  \sqrt{w(z)}}{2  \sqrt{w(z)}}\right) \nonumber \\&
\geq \cos  \sqrt{r} -|z||w'(z)|\left|\frac{\tanh  \sqrt{w(z)} }{2 \sqrt{w(z)}}\right| \quad (|z|=r<1) . 
\end{align}
Further Schwarz Pick Lemma, yields 
\begin{equation}\label{e8}
-|z||w'(z)|\left|\frac{\tanh  \sqrt{w(z)}}{ \sqrt{w(z)}}\right|\geq -|z|\frac{1-|w(z)|^{2}}{1-|z|^{2}}\left|\frac{\tanh {\sqrt{w(z)}}}{ \sqrt{w(z)}}\right|.
\end{equation}
Assume $w(z)=Re^{it},$ $t\in[-\pi,\pi]$ where $R\leq r,$   then   
%$|\tanh{\sqrt{w(z)}}/ \sqrt{w(z)}|\leq {\tan {\sqrt{r}}}/{\sqrt{r}}.$  
inequality \eqref{e8} yields 
\begin{equation}\label{e11}
\operatorname{Re}\left(\frac{zw'(z)\tanh{\sqrt{w(z)}}}{2 \sqrt{w(z)}}\right)\leq \frac{\sqrt{r}\tan{ \sqrt{r}}}{2  (1-r^{2})}.
\end{equation}
Thus from inequalities \eqref{e9} and \eqref{e11} we conclude that

\begin{equation*}\label{e10}
\operatorname{Re}\left(1+\frac{zf''(z)}{f(z)}\right)\geq \cos  \sqrt{r}  -\frac{\sqrt{r}\tan{ \sqrt{r}}}{2  (1-r^{2})}.
\end{equation*}
 Hence the least positive root of the equation  of $2  (1-r^{2}) \cos \sqrt{r} - \sqrt{r}\tan{\sqrt{r}}=\alpha$ will serve the purpose. 
\end{proof}

\begin{theorem}
For $-1\leq B<A\leq1,$ suppose $f\in\mathcal{S}^{*}_{n}[A,B],$ then the sharp $\mathcal{S}^{*}_{n}(\varrho)-$radius  is given by

\begin{enumerate}[(i)]
\item $\mathcal{R}_{\mathcal{S}^{*}_{n}(\varrho)}(\mathcal{S}^{*}_{n}[A,B])=\min\{1;((1-c_{0})/(A-B c_{0})^{1/n}\}=: \mathcal{R}_{0},$ where $0\leq B <A \leq 1.$
\item $\mathcal{R}_{\mathcal{S}^{*}_{n}(\varrho)}(\mathcal{S}^{*}_{n}[A,B])=
\left\{\begin{array}{cl}
  \mathcal{R}_{0},& \mathcal{R}_{0} \leq \mathcal{R}_{1}\\
  \mathcal{R}_{2},& \mathcal{R}_{0} > \mathcal{R}_{1},
 \end{array}\right.$ where $-1\leq B <0< A \leq 1.$
\end{enumerate}
where 
\[ \mathcal{R}_{1}=\left(\frac{c_{0}-2}{B (c_{0} B - 2 A)}\right)^{1/2 n}, \quad \mathcal{R}_{2}=\min\left\{1;\left(\frac{c_{1}-1}{ A - B c_{ 1}}\right)^{1/n}\right\}.\] 
\end{theorem}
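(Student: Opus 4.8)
The plan is to run the standard Ma-Minda radius argument: if $f\in\mathcal{S}^*_n[A,B]$ then $p(z):=zf'(z)/f(z)$ lies in $\mathcal{P}_n[A,B]$, so by Lemma~\ref{l11} the value $p(z)$ is confined, for $|z|=r$, to the disc centered at $c(r):=(1-ABr^{2n})/(1-B^2r^{2n})$ with radius $\rho(r):=(A-B)r^n/(1-B^2r^{2n})$. The condition $p(z)\prec\varrho(z)$ on $|z|\le r$ is then guaranteed as soon as this disc is contained in $\Omega_\varrho$, and by Theorem~\ref{t14} (the maximal-disc description of $\Omega_\varrho$) this containment holds precisely when $\rho(r)\le r_{c(r)}$, i.e. $\rho(r)\le c(r)-c_0$ when $c(r)\le(c_0+c_1)/2$ and $\rho(r)\le c_1-c(r)$ when $c(r)\ge(c_0+c_1)/2$. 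So the $\mathcal{S}^*_n(\varrho)$-radius is the largest $r\le 1$ for which the applicable inequality holds; the two displayed cases $0\le B<A\le1$ and $-1\le B<0<A\le1$ correspond to which way $c(r)$ moves as $r$ increases.

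\textbf{Case (i), $0\le B<A\le1$.} Here $c(r)\ge 1>c_0$ and $c(r)$ is decreasing toward... one checks $c(r)\le 1\le(c_0+c_1)/2$ on the relevant range (since $c(0)=1$ and $c(r)$ decreases when $B\ge0$... more precisely $c(r)-1 = (1-AB-1+B^2)r^{2n}/(1-B^2r^{2n}) = B(B-A)r^{2n}/(1-B^2r^{2n})\le 0$), so only the left-hand constraint $\rho(r)\le c(r)-c_0$ is active. Substituting and clearing the common positive denominator $1-B^2r^{2n}$, the inequality becomes $(A-B)r^n \le (1-ABr^{2n}) - c_0(1-B^2r^{2n})$, which rearranges to a bound of the form $r^n\le (1-c_0)/(A-Bc_0)$ — I would verify the $r^{2n}$ terms cancel, leaving this linear-in-$r^n$ inequality; its threshold gives $\mathcal{R}_0$, and intersecting with $r\le1$ yields the stated minimum. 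Sharpness comes from the extremal of Lemma~\ref{l11}: at $z=-r$ (suitably rotated) $p$ meets the boundary of the disc at the real point $c(r)-\rho(r)$, which equals $c_0\in\partial\Omega_\varrho$ exactly at $r=\mathcal{R}_0$, realized by $f(z)=z\exp\int_0^z(\hat\varrho(t)-1)/t\,dt$ with $\hat\varrho$ the appropriate extremal; I would exhibit the concrete function.

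\textbf{Case (ii), $-1\le B<0<A\le1$.} Now $B<0$ forces $c(r)=1+B(B-A)r^{2n}/(1-B^2r^{2n})>1$ and increasing in $r$, so as $r$ grows $c(r)$ first stays below $(c_0+c_1)/2$ (left constraint $\rho(r)\le c(r)-c_0$ active, giving the same threshold $\mathcal{R}_0$) and then exceeds it (right constraint $\rho(r)\le c_1-c(r)$ active). The crossover radius — where $c(r)=(c_0+c_1)/2$, equivalently $2(1-ABr^{2n})=(c_0+c_1)(1-B^2r^{2n})$ — is exactly $\mathcal{R}_1$ after solving for $r^{2n}$. If $\mathcal{R}_0\le\mathcal{R}_1$ the left constraint is the binding one throughout and the radius is $\mathcal{R}_0$; if $\mathcal{R}_0>\mathcal{R}_1$ then near $\mathcal{R}_0$ we are already in the regime $c(r)>(c_0+c_1)/2$, so the operative constraint is $\rho(r)\le c_1-c(r)$, and clearing denominators gives $(A-B)r^n\le c_1(1-B^2r^{2n})-(1-ABr^{2n})$, which simplifies (again the quadratic terms should collapse appropriately) to $r^n\le(c_1-1)/(A-Bc_1)$, hence $\mathcal{R}_2$. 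One must check monotonicity so that once the right constraint becomes active it remains the binding one up to $\mathcal{R}_2$, and that $\mathcal{R}_2\ge\mathcal{R}_1$ in this sub-case so the answer is consistent. Sharpness again uses the boundary point of the Lemma~\ref{l11} disc, now the far point $c(r)+\rho(r)$ hitting $c_1=\cosh1\in\partial\Omega_\varrho$ at $z=r$.

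\textbf{Main obstacle.} The routine algebra is the repeated "clear the denominator $1-B^2r^{2n}$ and watch the $r^{2n}$ terms cancel" step; the genuine subtlety is the bookkeeping in Case (ii) — confirming that the active constraint switches exactly once as $r$ increases, that the crossover is $\mathcal{R}_1$, and that in the branch $\mathcal{R}_0>\mathcal{R}_1$ the right-hand constraint really does control things all the way out to $\mathcal{R}_2$ (and not some intermediate value), together with matching the extremal function to the correct boundary point ($c_0$ versus $c_1$) in each branch so that the radii are genuinely sharp.
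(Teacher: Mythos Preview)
Your plan is exactly the paper's: use Lemma~\ref{l11} to trap $zf'(z)/f(z)$ in the disc with center $c(r)=(1-ABr^{2n})/(1-B^2r^{2n})$ and radius $(A-B)r^n/(1-B^2r^{2n})$, then invoke Theorem~\ref{t14} and split on whether $c(r)\le(c_0+c_1)/2$ or not, with $\mathcal{R}_1$ as the crossover. One small correction and one addition: the $r^{2n}$ terms do not literally cancel --- the cleared inequality is quadratic in $r^n$ and \emph{factors} as $(1+Br^n)\bigl((A-Bc_0)r^n-(1-c_0)\bigr)\le 0$ (respectively $(Br^n-1)\bigl((c_1B-A)r^n+(c_1-1)\bigr)\le 0$ on the $c_1$ side), the first factor having fixed sign on $(0,1)$; and for sharpness the paper exhibits the concrete Janowski extremal $\tilde f(z)=z(1+Bz^n)^{(A-B)/(nB)}$ (or $z\exp(Az^n/n)$ when $B=0$), not a $\varrho$-type integral.
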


\begin{proof}
As $f\in\mathcal{S}^{*}_{n}[A,B],$ then $p(z)=zf'(z)/f(z)$ lies in the disc $|p(z)-c|<R,$ where \[c=\frac{1-AB r^{2n}}{1-B^{2}r^{2n}} \quad \text{and} \quad R=\frac{(A-B)r^{n}}{1-B^{2}r^{2n}}.\] If $B\geq 0,$ then $c\leq 1.$ For $f(z)$ to lie in $\mathcal{S}^{*}_{n}(\varrho),$ Theorem \ref{t14} and Lemma \ref{l11} yields
   \[\frac{(A-B)r^{n}}{1-B^{2}r^{2n}} \leq \frac{1-AB r^{2n}}{1-B^{2}r^{2n}}-c_{0}.\] The above inequality gives $r\leq R_{0}.$ Equality here holds for $\tilde{f}(z)$ of the form 
 \begin{equation}\label{e13} 
 \tilde{f}(z)= 
\left\{\begin{array}{cl}
  z(1 + B z^{n})^{(A-B)/nB},& B \neq 0\\
  z \exp{(A z^{n}/n)},& B=0.
 \end{array}\right.
 \end{equation}
Further, if $-1 \leq B<0<A\leq 1$ and   $\mathcal{R}_{0} \leq \mathcal{R}_{1},$ then $c\leq (c_{0}+c_{1})/2$ if and only if $r\leq \mathcal{R}_{1}.$ Therefore, for $0\leq r\leq \mathcal{R}_{0},$  we deduce that $c\leq (c_{0}+c_{1})/2.$ Infact due to Theorem \ref{t14} for each $f\in\mathcal{S}^{*}_{n}(\varrho),$ we have $(A-B)r^{n}/(1 - B^{2} r^{2 n})\leq c - c_{0},$ equivalently $r\leq \mathcal{R}_{0}.$  Furthermore assume that $\mathcal{R}_{0} > \mathcal{R}_{1}.$ Then $c\geq (c_{0}+c_{1})/2$ if and only if $r\geq \mathcal{R}_{1}.$ In particular for $ r \geq \mathcal{R}_{0},$ we have $c\geq (c_{0}+c_{1})/2.$ Thus  by Theorem \ref{t14}, for each $f\in\mathcal{S}^{*}_{n}(\varrho),$ the inequality $(A-B)r^{n}/(1 - B^{2} r^{2 n}) \geq c_{1} - c$ is equivalent to $r \leq \mathcal{R}_{2}.$  The function $\tilde{f}(z)$ given in \eqref{e13} works as the extremal function.
\end{proof}

\begin{theorem}
Let $\beta>1,$ then the sharp $\mathcal{S}^{*}_{n}(\varrho)-$radius for the class $\mathcal{M}_{n}(\beta),$  is given by  \[\mathcal{R}_{\mathcal{S}^{*}_{n}(\varrho)}(\mathcal{M}_{n}(\beta))=\left(\frac{1-c_{0}}{2 \beta - (1+c_{0})}\right)^{1/n}.\] 
\begin{proof}
As $f\in\mathcal{M}_{n}(\beta),$ then $z f'(z)/f(z)\prec (1+(1-2 \beta) z)/(1 - z).$ Clearly, for each $\beta>1,$ $(1+(1-2 \beta)r^{2n})/(1-r^{2n}) \leq 1.$   Further by Lemma \ref{l11}, we get
\[\left|\frac{zf'(z)}{f(z)}-\frac{1+(1-2 \beta)r^{2n}}{1-r^{2n}}\right| \leq \frac{2(\beta-1)r^{n}}{1-r^{2n}}.\] On applying Theorem \ref{t14}, we have
\[\frac{2(\beta-1)r^{n}}{1-r^{2n}}\leq \frac{1+(1-2\beta)r^{2n}}{1-r^{2n}}-c_{0}\] or equivalently $r^{2n}((1-2 \beta)+c_{0})-2(\beta-1)r^{n}+1-c_{0}\geq 0,$ which gives $r\leq \mathcal{R}_{\mathcal{S}^{*}_{n}(\varrho)}(\mathcal{M}_{n}(\beta)).$ The required extremal function is
$\tilde{f}(z)=z/(1-z^{n})^{2(1-\beta)/n}.$ % as  $z_{0}\tilde{f}'(z_{0})/\tilde{f}(z_{0})=c_{0}$  at the point $z_{0}=\mathcal{R}_{\mathcal{S}^{*}_{n}(\varrho)}(\mathcal{M}_{n}(\beta)).$
\end{proof}
\end{theorem}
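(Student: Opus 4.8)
\textbf{Proof proposal for the $\mathcal{S}^{*}_{n}(\varrho)$-radius of $\mathcal{M}_{n}(\beta)$.}

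The plan is to follow the same template used in the preceding two radius theorems: pin down the disc in which $zf'(z)/f(z)$ is confined, then invoke Theorem \ref{t14} to force that disc into $\Omega_\varrho$, and finally check sharpness with an explicit extremal function. First I would observe that $f\in\mathcal{M}_{n}(\beta)$ means $p(z):=zf'(z)/f(z)\prec(1+(1-2\beta)z)/(1-z)$; writing this as a Janowski-type subordination with $A=1-2\beta$ and $B=-1$, and noting $A\neq B$ since $\beta>1$, I can apply the $\mathcal{P}_n(\alpha)$-case of Lemma \ref{l11} (with $\alpha=\beta$, so $1-2\alpha=1-2\beta$) to get that for $|z|=r$,
\[
\left|p(z)-\frac{1+(1-2\beta)r^{2n}}{1-r^{2n}}\right|\le\frac{2(\beta-1)r^{n}}{1-r^{2n}}.
\]
The center $c:=(1+(1-2\beta)r^{2n})/(1-r^{2n})$ satisfies $c\le 1$ for every $r\in(0,1)$ because $\beta>1$ makes the numerator no larger than the denominator; hence we are in the regime $c\le(\cos 1+\cosh 1)/2$ of Theorem \ref{t14}, and the relevant constraint is $R\le c-c_0$, i.e.
\[
\frac{2(\beta-1)r^{n}}{1-r^{2n}}\le\frac{1+(1-2\beta)r^{2n}}{1-r^{2n}}-c_{0}.
\]

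Next I would clear the positive denominator $1-r^{2n}$ and rearrange into the quadratic inequality in $t:=r^n$
\[
\bigl(c_{0}+1-2\beta\bigr)t^{2}-2(\beta-1)t+(1-c_{0})\ge 0 .
\]
Since $\beta>1$ the leading coefficient $c_0+1-2\beta$ is negative, so the parabola opens downward and the inequality holds precisely between its two roots; the relevant (smaller, positive) root governs the radius. Solving — or more cleanly, verifying by direct substitution that $t=(1-c_0)/(2\beta-(1+c_0))$ is a root and that the quadratic is nonnegative for $t$ up to that value — yields $r\le\bigl((1-c_0)/(2\beta-(1+c_0))\bigr)^{1/n}$, which is exactly $\mathcal{R}_{\mathcal{S}^{*}_{n}(\varrho)}(\mathcal{M}_{n}(\beta))$. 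One should note this quantity lies in $(0,1)$: the denominator exceeds the numerator iff $2\beta-(1+c_0)>1-c_0$ iff $\beta>1$, so no truncation at $1$ is needed.

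For sharpness I would take $\tilde f(z)=z/(1-z^n)^{2(1-\beta)/n}$, for which $z\tilde f'(z)/\tilde f(z)=(1+(1-2\beta)z^n)/(1-z^n)$, the extremal Janowski function; at $z=r$ with $r^n=\mathcal{R}^n$ the modulus bound in Lemma \ref{l11} is attained and $zf'(z)/f(z)$ lands on the boundary point $c-R=c_0=\cos 1$ of $\Omega_\varrho$, showing the radius cannot be enlarged. The only mildly delicate point is confirming throughout that $1-r^{2n}>0$ and $c\le 1$ so that Theorem \ref{t14}'s first branch is the operative one and the direction of the inequality is preserved when multiplying through; everything else is the routine quadratic manipulation above, so I do not anticipate a substantive obstacle.
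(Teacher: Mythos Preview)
Your proposal is correct and follows essentially the same approach as the paper: apply Lemma~\ref{l11} (in its Janowski form with $A=1-2\beta$, $B=-1$) to locate $zf'(z)/f(z)$ in a disc with center $c\le 1$, invoke the first branch of Theorem~\ref{t14} to obtain the quadratic inequality in $r^n$, and verify sharpness with $\tilde f(z)=z/(1-z^{n})^{2(1-\beta)/n}$. Your write-up is in fact slightly more careful than the paper's in checking that $\mathcal{R}<1$ and in spelling out why the first branch of Theorem~\ref{t14} applies.
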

Recently, Lecko et al. \cite{Lecko & Sim} investigated the expressions $\operatorname{Re}(1-z^{2})f(z)/z>0$ and $\operatorname{Re}(1-z)^{2}f(z)/z>0,$ involving the starlike functions $z/(1-z^{2})$ and $z/(1-z)^{2}.$ In 2019, Cho et al.\cite{Cho & Virender(2019)}  estimated radii constants for classes characterised by the ratio of two analytic functions $f(z)$ and $g(z)$ with certain conditions on $g(z),$ namely $\operatorname{Re}g(z)/z > \alpha$ for $\alpha=0$ or $1/2,$ such that $\operatorname{Re} f(z)/g(z)>0.$   % Further for  $\operatorname{Re} g(z)/z >0,$ the inequality $|f(z)/g(z)-1|<1$ was also discussed by Cho et al.\cite{Cho & Virender(2019)}. %Likewise Khatter et al. \cite{Khatter & V.Ravi(2020)}, Kumar et al. \cite{Kumar & Kamaljeet (2021)} and Kumar \cite{Virender} have also discussed similar expressions. 
Motivated by these classes,
here below we define some subclasses of $\mathcal{A}_{n},$ % where $g\in\mathcal{A}_{n},$ 
\[\mathfrak{F}_{1}(\beta):=\left\{f\in\mathcal{A}_{n}:\left|\frac{f(z)}{g(z)}-1\right|<1 \hskip 0.2cm \text{and} \hskip 0.2cm \operatorname{Re} \frac{g(z)}{z} > \beta , g\in\mathcal{A}_{n}\right\} \quad (\beta \in\left\{0,1/2\right\})\] 
and 
\[\mathfrak{F}_{2}:=\left\{f\in\mathcal{A}_{n}:\left|\frac{f(z)}{g(z)}-1\right|<1 \hskip 0.2cm \text{and} \hskip 0.2cm  g\in\mathcal{A}_{n} \text{ is convex}\right\} .\]
\begin{definition}
Let  $-1\leq A \leq 1$ and $g\in\mathcal{A}_{n},$ then for each $n=1,2,\ldots ,$ $\mathfrak{F}_{3}\subset\mathcal{A}_{n},$ be defined as:
\[\mathfrak{F}_{3}:=\left\{f\in\mathcal{A}_{n}:\operatorname{Re}\dfrac{f(z)}{g(z)}>0 \hskip 0.2cm  \text{and}  \hskip 0.2cm \operatorname{Re}\dfrac{{(1-z^{n})^{(1+A)/n}} g(z) }{z}>0\right\}.\]
\end{definition}

\begin{remark}\label{rem2}
The functions $\tilde{f}(z)=z(1+(1-2 \beta)z^{n})$ and $\tilde{g}(z)=z(1+(1-2 \beta)z^{n})/(1-z^{n})$ defined on $\mathbb{D}$ satisfy $|(\tilde{f}(z)/\tilde{g}(z))-1|=|z|^{n}<1$ and $\operatorname{Re}\tilde{g}(z)/z  =\operatorname{Re} (1+(1-2 \beta)z^{n})/(1-z^{n}) > \beta.$ Therefore $\tilde f\in \mathfrak{F}_{1}(\beta),$ where  $\beta\in\left\{0,1/2\right\}.$ If  $\tilde{f}(z)=z(1+z^{n})/(1 -z^{n})^{1/n}$ and $\tilde{g}(z)=z/(1 - z^{n})^{1/n},$ then $\tilde {f}\in\mathfrak{F}_{2}.$ 
 Similarly when $\tilde{f}(z)=z(1+z^{n})^{2}/(1-z^{n})^{2+(1+A)/n}$  and $\tilde{g}(z)=z(1+z^{n})/(1 - z^{n})^{1+(1+A)/n},$ then  $\tilde{f}\in\mathfrak{F}_{3}.$ Therefore the class $\mathfrak{F}_{3}$ is non-empty. 
\end{remark}

\begin{theorem}
The sharp $\mathcal{S}^{*}_{n}(\varrho)-$ radii for the classes $\mathfrak{F}_{1}(0), \mathfrak{F}_{1}(1/2)$  and $\mathfrak{F}_{2},$ are respectively given by  
\begin{enumerate}[(i)]
    \item $\mathcal{R}_{\mathcal{S}^{*}_{n}(\varrho)}(\mathfrak{F}_{1}(0))=\left(\dfrac{\sqrt{9 n^2-4 (c_{0} - 1) (1+n-c_{0})}-3 n}{2 (1+n-c_{0})}\right)^{1/n}.$
        \item $\mathcal{R}_{\mathcal{S}^{*}_{n}(\varrho)}(\mathfrak{F}_{1}(1/2))=\left(\dfrac{1-c_{0}}{ 2n - (c_{0}-1)}\right)^{1/n}.$
    \item $\mathcal{R}_{\mathcal{S}^{*}_{n}(\varrho)}(\mathfrak{F}_{2})=\left(\dfrac{\sqrt{1+n (n+6)+4 c_{0} (c_{0} -(1+n))}-(1+n)}{2 (n - c_{0})}\right)^{1/n}.$
\end{enumerate}

\end{theorem}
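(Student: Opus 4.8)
The strategy in all three cases is the same template used for the earlier radius theorems in this section: translate the defining hypotheses of $\mathfrak{F}_{i}$ into a disc-bound for $p(z) := zf'(z)/f(z)$ of the form $|p(z)-c|<R$ with $c = c(r)$, $R = R(r)$, and then invoke Theorem \ref{t14} to force that disc inside $\Omega_{\varrho}$. Concretely, writing $f(z)/g(z) = 1 + \omega_1(z)$ with $\omega_1$ a Schwarz function of $\mathcal{A}_n$-type and $g(z)/z$ (or its reciprocal, depending on the class) constrained by the remaining hypothesis, I would logarithmically differentiate $f = g\cdot(1+\omega_1)$ to get
\[
\frac{zf'(z)}{f(z)} = \frac{zg'(z)}{g(z)} + \frac{z\omega_1'(z)}{1+\omega_1(z)}.
\]
The second term is controlled by the Schwarz–Pick estimate exactly as in the proof of Theorem \ref{e23}: on $|z|=r$ it lies in a disc centred at $0$ of radius $nr^n/(1-r^{2n})$ (the standard bound $|\omega_1(z)/(1+\omega_1(z)) - \text{(real center)}| \le \cdots$, or more simply $|z\omega_1'/(1+\omega_1)|\le nr^n/(1-r^{2n})$ after using $|\omega_1|\le r^n$). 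The first term is handled per case: for $\mathfrak{F}_1(\beta)$ one has $g(z)/z \prec (1+(1-2\beta)z^n)/(1-z^n)$, so $zg'(z)/g(z) = 1 + z(g/z)'/(g/z)$ lies in a disc whose centre and radius come from Lemma \ref{l11} (applied to $g(z)/z \in \mathcal{P}_n(\beta)$) together with Lemma \ref{l10} for the derivative part; for $\mathfrak{F}_2$, convexity of $g$ gives $\operatorname{Re}(1+zg''/g') > 1/2$, i.e. $zg'/g \in \mathcal{P}_n(1/2)$ up to the right normalization, and again Lemma \ref{l11} bounds it in a disc. Summing the two discs (centres add, radii add) produces the required $|p(z)-c(r)|<R(r)$.

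Next I would impose the inclusion $|p(z)-c(r)|<R(r) \subset \Omega_{\varrho}$ via Theorem \ref{t14}. Since in every case the centre $c(r)$ will satisfy $c(r)<1 = (c_0+c_1)/2$ is false in general — one checks $c(r)\le (c_0+c_1)/2$ for the relevant range of $r$ — the governing inequality is $R(r) \le c(r) - c_0$, i.e. the left edge of the disc must clear the vertical line $\operatorname{Re}u = c_0$. Clearing denominators ($1-r^{2n}$ throughout) turns $R(r) \le c(r)-c_0$ into a quadratic inequality in $r^n$: for $\mathfrak{F}_1(1/2)$ it is linear-in-$r^n$ after simplification, giving directly $r^n \le (1-c_0)/(2n-(c_0-1))$; for $\mathfrak{F}_1(0)$ and $\mathfrak{F}_2$ it is a genuine quadratic $A(r^n)^2 + B r^n + C \le 0$ whose relevant root is the one written in the statement (the discriminants $9n^2 - 4(c_0-1)(1+n-c_0)$ and $1+n(n+6)+4c_0(c_0-(1+n))$ are exactly what appears, confirming the bookkeeping). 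Thus $\mathcal{R}_{\mathcal{S}^*_n(\varrho)}$ is the unique root in $(0,1]$ of that quadratic, capped at $1$.

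Finally, sharpness: for each class I would exhibit the extremal $\tilde f$ from Remark \ref{rem2} and verify that at $z_0 = -r^{1/?}\,$ — more precisely at the boundary point of $\mathbb{D}$ scaled to $|z_0| = \mathcal{R}$ where the Schwarz function and the $g$-subordinant simultaneously attain their extremal values (so $\omega_1(z_0) = z_0^n$ in modulus and $g(z_0)/z_0$ sits on the boundary circle of its disc) — the quantity $z_0\tilde f'(z_0)/\tilde f(z_0)$ lands exactly on $\partial\Omega_{\varrho}$, namely on the boundary point $\varrho(\sqrt{-\mathcal{R}^{\,n}})$ where $\operatorname{Re}\varrho = c_0$. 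That shows the radius cannot be enlarged. The main obstacle I anticipate is purely computational: correctly combining the \emph{two} disc-bounds (one from $g$, one from the Schwarz factor $\omega_1/(1+\omega_1)$) with the right centres — in particular tracking that the extra $+zg'/g$ term shifts the effective centre and inflates the radius by the $\mathcal{P}_n$-derivative bound from Lemma \ref{l10} — and then verifying the resulting quadratic matches the stated closed forms; the case split in $\mathfrak{F}_1$ (whether the disc centre crosses $(c_0+c_1)/2$) must be checked to confirm that only the "$\operatorname{Re}u>c_0$" constraint is active and the "$\operatorname{Re}u<c_1$" side never binds in $[0,\mathcal{R}]$.
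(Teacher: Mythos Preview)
Your overall template is right and coincides with the paper's: write $f=zp_1p_2$ (or $f=gp$), logarithmically differentiate, bound each piece by Lemmas~\ref{l11}--\ref{l10}, and then apply the $c-c_0$ branch of Theorem~\ref{t14}. The sharpness argument via the extremal $\tilde f$ from Remark~\ref{rem2} is also exactly what the paper does.

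There is, however, a concrete error in your control of the Schwarz piece. You claim $|z\omega_1'/(1+\omega_1)|\le nr^n/(1-r^{2n})$ on $|z|=r$, but this fails already for $\omega_1(z)=z^n$: at a point with $z^n=-r^n$ one gets $nr^n/(1-r^n)$, which is strictly larger than your bound. With $nr^n/(1-r^{2n})$ in place of the correct $nr^n/(1-r^n)$ the resulting quadratics do \emph{not} reproduce the stated discriminants (for instance, part~(ii) would become a genuine quadratic in $r^n$ rather than the linear equation yielding $(1-c_0)/(2n+1-c_0)$). The clean fix---and this is what the paper does---is to avoid Schwarz--Pick here altogether: the hypothesis $|f/g-1|<1$ is equivalent to $\operatorname{Re}(g/f)>1/2$, so $1/p_1\in\mathcal{P}_n(1/2)$ and Lemma~\ref{l10} gives $|zp_1'/p_1|=|z\omega_1'/(1+\omega_1)|\le nr^n/(1-r^n)$ directly. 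With this bound your computations go through and match the paper's: $|zf'/f-1|\le nr^n(3+r^n)/(1-r^{2n})$ in~(i), $\le 2nr^n/(1-r^n)$ in~(ii), and $|zf'/f-1/(1-r^{2n})|\le ((n+1)r^n+nr^{2n})/(1-r^{2n})$ in~(iii).

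One smaller slip: in~(iii), convexity of $g$ gives $\operatorname{Re}(1+zg''/g')>0$, not $>1/2$; it is the Marx--Strohh\"acker theorem that then yields $\operatorname{Re}(zg'/g)>1/2$, i.e.\ $zg'/g\in\mathcal{P}_n(1/2)$, after which Lemma~\ref{l11} (not Lemma~\ref{l10}) supplies the disc $|zg'/g-1/(1-r^{2n})|\le r^n/(1-r^{2n})$.
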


\begin{proof}
Assume  $f(z)/g(z)=p_{1}(z)$ and $g(z)/z=p_{2}(z),$ where $f(z)$ and $g(z)$ are analytic functions in $\mathbb{D}.$ 
\begin{enumerate}[(i)]
\item  As $f\in\mathfrak{F}_{1}(0),$ then $p_{2}\in\mathcal{P}_{n}(0).$ We know  that $|p_{1}(z)-1|<1$ holds if  $\operatorname{Re}(1/p_{1}(z))>1/2$ and vice-versa. Assume $f(z)= z p_{1}(z)p_{2}(z).$ Now using the expressions of $p_{1}(z),$ $p_{2}(z)$ and  by applying Theorem \ref{t14} and Lemma \ref{l10} we have \[\left|\frac{zf'(z)}{f(z)}-1\right|=\left|\frac{z p_{2}'(z)}{p_{2}(z)} - \frac{z p_{1}'(z)}{p_{1}(z)}\right| \leq \frac{(3+r^{n})n r^{n}}{1- r^{2n}}\leq 1- c_{0}.\] The above inequality leads to  $r^{2n}(n +1 - c_{0})+3 n r^{n} -1 + c_{0} \leq 0,$ 
provided $r\leq \mathcal{R}_{\mathcal{S}^{*}_{n}(\varrho)}(\mathfrak{F}_{1}(0)).$ The functions $\tilde{f}(z)=z(1+z^{n})/(1-z^{n})^{2}$  and $\tilde{g}(z)=z (1+z^{n})/(1-z^{n})$  at $z_{0}=\mathcal{R}_{\mathcal{S}^{*}_{n}(\varrho}(\mathfrak{F}_{1}(0))e^{i\pi/n}$ gives 
\[\frac{z_{0}\tilde{f}'(z_{0})}{\tilde{f}(z_{0})}-1 = \frac{(3+z_{0}^{n})n z_{0}^{n}}{1 - z_{0}^{2n}}=1-c_{0}.\] Thus $\tilde{f}$ is the extremal function.
\item  As $f\in\mathcal{R}_{\mathcal{S}^{*}_{n}(\varrho)}(\mathfrak{F}_{1}(1/2)),$ then $1/p_{1},p_{2}\in\mathcal{P}_{n}(1/2).$ Proceeding as in (i), on applying  Theorem \ref{t14} and Lemma \ref{l10} we get 
%\[\frac{z f'(z)}{f(z)} - 1  = \frac{z p_{2}'(z)}{p_{2}(z)} - \frac{z p_{1}'(z)}{p_{1}(z)}.\]  
\[\left|\frac{z f'(z)}{f(z)} - 1  \right| \leq \frac{2 n r^{n}}{1 - r^{n}} \leq 1 - c_{0}.\] This holds true whenever $r\leq \mathcal{R}_{\mathcal{S}^{*}_{n}(\varrho)}(\mathfrak{F}_{1}(1/2)).$ For sharpness, consider 
$\tilde{f}(z) = z $ and  $\tilde{g}(z) = z/(1-z^{n}),$ then at $z_{0}=\mathcal{R}_{\mathcal{S}^{*}_{n}(\varrho)}(\mathfrak{F}_{1}(1/2))e^{i\pi/n},$ we get
\[\frac{z_{0}\tilde{f}'(z_{0})}{\tilde{f}(z_{0})}-1 = \frac{2 n z_{0}^{n}}{1 - z_{0}^{n}} = 1 - c_{0}.\]
\item Let $f(z)/g(z)=p(z)$ be a function defined in $\mathbb{D}.$ As $f\in\mathfrak{F}_{2},$ then $|1/p(z)-1|<1$ if and only if $\operatorname{Re} p(z)>1/2.$ As $g\in\mathcal{A}_{n}$ is convex, then due to Marx-Strohh$\ddot{a}$cker theorem, $g\in\mathcal{S}_{n}^{*}(1/2),$ $(\mathcal{S}^{*}_{n}(1/2)=\{f\in\mathcal{A}_{n}:\operatorname{Re}zf'(z)/f(z)>1/2\})$. Therefore due to Lemma \ref{l11},  \[\left|\frac{zg'(z)}{g(z)}-\frac{1}{1-r^{2 n}}\right| \leq \frac{r^{n}}{1-r^{2n}}.\] On logarithmically differentiating $f(z)$ and applying Theorem \ref{t14}, we get
\begin{align*}
    \left|\frac{zf'(z)}{f(z)}-\frac{1}{1-r^{2n}}\right|&=\left|\frac{z g'(z)}{g(z)} - \frac{z p'(z)}{p(z)} -\frac{1}{1-r^{2n}} \right| \\& \leq \frac{n r^{2n} + (1+n)r^{n}}{1 - r^{2 n}}  \leq \frac{1}{1-r^{2n}} - c_{0},
\end{align*} which leads to $r^{2n}(n - c_{0}) +r^{n} (1+n) -1+c_{0} \leq 0,$ provided $r \leq \mathcal{R}_{\mathcal{S}^{*}_{n}(\varrho)}(\mathfrak{F}_{2}).$ The functions $\tilde{f}(z)=z(1+z^{n})/(1 -z^{n})^{1/n}$ and $\tilde{g}(z)=z/(1 - z^{n})^{1/n}$ 
at $z_{0}=\mathcal{R}_{\mathcal{S}^{*}_{n}(\varrho)}(\mathfrak{F}_{2})e^{i\pi/n}$ gives  $|z_{0}\tilde{f}'(z_{0})/\tilde{f}(z_{0})| = c_{0}.$ Hence the result is sharp.
\end{enumerate}
\end{proof}

\begin{theorem}
Let $r\in[0,1),$ then the sharp $\mathcal{S}^{*}_{n}(\varrho)-$radius for the class  $\mathfrak{F}_{3}$ is given by  
\[ \mathcal{R}_{\mathcal{S}^{*}_{n}(\varrho)}(\mathfrak{F}_{3})
=\left\{\begin{array}{cl}
  \mathcal{R}_{0},&   r \leq  \mathcal{R}_{0}, \\
  \mathcal{R}_{1},&  r \geq \mathcal{R}_{0},
 \end{array}\right. \]
   where 
  \begin{align*}
 \mathcal{R}_{0}& =  
  \begin{cases} 
    \left(\dfrac{1+A+4 n +\sqrt{(1+A+4 n)^2-4 (1-c_{0}) (A + c_{0})}}{2 (A+c_{0})}\right)^{1/n} &\text{if }-1\leq A <-c_{0},  \\ & \\
      \left(\dfrac{1 - c_{0}}{1+4n - c_{0}}\right)^{1/n} & \text{if }   A = - c_{0}, \\& \\
      \left(\dfrac{1+A+4 n - \sqrt{(1 + A + 4 n)^2-4 (1-c_{0}) (A + c_{0})}}{2 (A+c_{0})}\right)^{1/n}  & \text{if } - c_{0} < A \leq 1,
 \end{cases}
\end{align*}
and  \[ \mathcal{R}_{1} = \left(\frac{\sqrt{(1 + A + 4 n)^{2} + 4 (A 
 +c_{1})(c_{1} - 1)}-(1 + A + 4 n )}{2 (A + c_{1})}\right)^{1/n}.\]

\begin{proof}
Let $f\in \mathfrak{F}_{3},$ then  $\operatorname{Re}f(z)/g(z)>0$ and $\operatorname{Re}((1-z^{n})^{(1+A)/n}g(z)/z)>0,$ where $g\in\mathcal{A}_{n}.$ Define $g(z)/f(z)=p_{1}(z)$ and $(1-z^{n})^{(1+A)/n}g(z)/z=p_{2}(z),$ where $p_{1}(z)$ and $p_{2}(z)$ are analytic in $\mathbb{D}.$  Since $A<1,$ then for $|z|=r<1,$ the inequality $(1 + A r^{2n})\geq 1-r^{2n},$ holds true.
Further on logarithmically differentiating $zp_{1}(z)p_{2}(z)(1-z^{n})^{-(1+A)/n}=f(z),$  we get 
\[\frac{zf'(z)}{f(z)}=\frac{1+Az^{n}}{1-z^{n}}+\frac{z p_{1}'(z)}{p_{1}(z)}+\frac{zp_{2}'(z)}{p_{2}(z)}.\]
Due to Lemmas \ref{l11} - \ref{l10}, for $|z|=r,$ we infer 

\begin{equation}\label{e61}
\left|\frac{zf'(z)}{f(z)} - \frac{1 + A r^{2n}}{1 - r^{2n}} \right| \leq \frac{4 n r^{n}}{1 - r^{2n}}+\frac{(1 + A)r^{n}}{1-r^{2n}}. 	
\end{equation}
Assume $c=(1+A r^{2n})/(1 - r^{2n}).$ Then $c\leq (c_{0}+c_{1})/2$ leads to $r\leq \mathcal{R}$ and vice-versa, where $\mathcal{R}=\left((c_{0} -2)/(2 A +c_{0})\right)^{1/2n}.$  Algebraically, for each $n=1,2,3, \ldots,$ it can be observed that, for the given range of $A,$ we have $\mathcal{R}_{0}<\mathcal{R}_{1}<\mathcal{R}.$  In particular, if $r\leq \mathcal{R}_{0},$ then $c\leq (c_{0}+c_{1})/2.$ Further due to Theorem \ref{t14}, inequality \eqref{e61} gives 
\[\frac{4 n r^{n}}{1 - r^{2n}}+\frac{(1 + A)r^{n}}{1-r^{2n}}\leq \frac{1 + A r^{2n}}{1 - r^{2n}} - c_{0},\]
whenever $r\leq \mathcal{R}_{0}.$ Moreover if $c \geq (c_{0}+c_{1})/2,$ then $r\geq \mathcal{R}_{0}.$ Infact, when $r\geq \mathcal{R}_{0},$ then we have $c\geq (c_{0}+c_{1})/2.$ Now  inequality \eqref{e61} together with Theorem \ref{t14} yields
\[\frac{4 n r^{n}}{1 - r^{2n}}+\frac{(1 + A)r^{n}}{1-r^{2n}}\leq c_{1} - \frac{1 + A r^{2n}}{1 - r^{2n}},\]
provided $r\leq \mathcal{R}_{1}.$ Thus the following functions, mentioned in Remark \ref{rem2}
\begin{align*}
\tilde{f}(z)=\frac{z(1+z^{n})^{2}}{(1-z^{n})^{2+(1+A)/n}} \quad \text{and} \quad \tilde{g}(z)= \frac{z(1+z^{n})}{(1-z^{n})^{1+(1+A)/n}},
\end{align*}
serve as the extremal function for both the cases. 
\end{proof}
\end{theorem}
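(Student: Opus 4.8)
The plan is to run the representation-plus-disc scheme used in the preceding radius theorems. Put $p_{1}(z)=f(z)/g(z)$ and $p_{2}(z)=(1-z^{n})^{(1+A)/n}g(z)/z$; the two conditions defining $\mathfrak{F}_{3}$ say exactly that $p_{1},p_{2}\in\mathcal{P}_{n}$ (using that $\operatorname{Re}w>0$ if and only if $\operatorname{Re}(1/w)>0$, so the orientation of $p_{1}$ is immaterial) and that $f(z)=z\,p_{1}(z)\,p_{2}(z)\,(1-z^{n})^{-(1+A)/n}$. Logarithmic differentiation, together with the identity $1+(1+A)z^{n}/(1-z^{n})=(1+Az^{n})/(1-z^{n})$, then gives
\[
\frac{zf'(z)}{f(z)}=\frac{1+Az^{n}}{1-z^{n}}+\frac{zp_{1}'(z)}{p_{1}(z)}+\frac{zp_{2}'(z)}{p_{2}(z)}.
\]

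I would then estimate the three summands on $|z|=r$. Lemma \ref{l11}, applied with $B=-1$ and the variable $z^{n}$, puts $(1+Az^{n})/(1-z^{n})$ in the disc centred at $c:=(1+Ar^{2n})/(1-r^{2n})$ of radius $(1+A)r^{n}/(1-r^{2n})$, while Lemma \ref{l10} with $\alpha=0$ gives $|zp_{i}'(z)/p_{i}(z)|\le 2nr^{n}/(1-r^{2n})$ for $i=1,2$. Adding, $zf'(z)/f(z)$ lies in the disc $|u-c|<R(r)$ with $R(r)=(1+A+4n)r^{n}/(1-r^{2n})$, which is \eqref{e61}. Since $A\ge-1$ forces $1+Ar^{2n}\ge 1-r^{2n}>0$, one has $c\ge 1>c_{0}$ for every $r\in[0,1)$, and $c$ increases with $r$ from the value $1$, crossing $(c_{0}+c_{1})/2$ at a single value $\mathcal{R}$. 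By Theorem \ref{t14} this disc is contained in $\Omega_{\varrho}$ whenever either $c\le(c_{0}+c_{1})/2$ and $R(r)\le c-c_{0}$, or $c\ge(c_{0}+c_{1})/2$ and $R(r)\le c_{1}-c$.

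Clearing the positive factor $1-r^{2n}$ turns the first alternative into the quadratic inequality $(A+c_{0})t^{2}-(1+A+4n)t+(1-c_{0})\ge 0$ in $t=r^{n}$, and the second into $(A+c_{1})t^{2}+(1+A+4n)t-(c_{1}-1)\le 0$; solving each for its smallest positive root produces $\mathcal{R}_{0}$ and $\mathcal{R}_{1}$. The trichotomy in the formula for $\mathcal{R}_{0}$ is dictated by the sign of the leading coefficient $A+c_{0}$: negative for $-1\le A<-c_{0}$ (the parabola opens downward), zero for $A=-c_{0}$ (the inequality degenerates to a linear one), positive for $-c_{0}<A\le 1$; for $\mathcal{R}_{1}$ one has $A+c_{1}>0$ throughout, hence a single expression. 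The first alternative holds on $[0,\min\{\mathcal{R},\mathcal{R}_{0}\}]$ and the second on $[\mathcal{R},\mathcal{R}_{1}]$; comparing the three quantities $\mathcal{R}_{0}$, $\mathcal{R}_{1}$, $\mathcal{R}$ for each $n\ge 1$ and each admissible $A$ then pins down the stated piecewise value of $\mathcal{R}_{\mathcal{S}^{*}_{n}(\varrho)}(\mathfrak{F}_{3})$.

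For sharpness I would use the functions $\tilde{f}(z)=z(1+z^{n})^{2}/(1-z^{n})^{2+(1+A)/n}$ and $\tilde{g}(z)=z(1+z^{n})/(1-z^{n})^{1+(1+A)/n}$ from Remark \ref{rem2}, which belong to $\mathfrak{F}_{3}$; a direct computation gives $z\tilde{f}'(z)/\tilde{f}(z)=1+2nz^{n}/(1+z^{n})+(2n+1+A)z^{n}/(1-z^{n})$, which equals $c-R(r)$ at a point $z_{0}$ with $z_{0}^{n}=-r^{n}$ and $c+R(r)$ at $z_{0}^{n}=r^{n}$. Taking $r=\mathcal{R}_{0}$ in the first case makes this $c_{0}$, and $r=\mathcal{R}_{1}$ in the second makes it $c_{1}$, the two real boundary points of $\Omega_{\varrho}$, so neither radius can be enlarged. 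The genuinely laborious step is the comparison of $\mathcal{R}_{0}$, $\mathcal{R}_{1}$ and $\mathcal{R}$ uniformly in $n$ and $A$, which is what guarantees that the two regions supplied by Theorem \ref{t14} splice together into one disc about the origin rather than leaving a gap; the sign bookkeeping in the branch $-1\le A<-c_{0}$, where the first quadratic opens downward, needs care too. The remaining ingredients — the factorization of $f$, the two lemma estimates, and the boundary-point check — are routine.
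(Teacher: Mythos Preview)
Your approach is essentially identical to the paper's: the same factorization $f(z)=z\,p_{1}(z)\,p_{2}(z)\,(1-z^{n})^{-(1+A)/n}$ (the paper writes $p_{1}=g/f$, which is a harmless typo since $|zp'/p|$ is unchanged under $p\mapsto 1/p$), the same logarithmic differentiation, the same application of Lemmas~\ref{l11}--\ref{l10} to obtain \eqref{e61}, the same two-case disc comparison via Theorem~\ref{t14}, and the same extremal pair $\tilde f,\tilde g$ from Remark~\ref{rem2}. Your write-up is in fact more explicit than the paper's on two points---the sign of $A+c_{0}$ driving the trichotomy for $\mathcal{R}_{0}$, and the boundary-point verification for sharpness---but the underlying argument is the same.
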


\section{Certain estimates for the class  $\mathcal{S}^{*}_{\varrho}$}
In this section certain sufficient conditions for the class $\mathcal{S}^{*}_{\varrho}$ are established. %We begin with the following criterion.
\begin{theorem}\label{t4}
Let $f\in\mathcal{A}$ , then $f\in\mathcal{S}^{*}_{\varrho}$ if and only if 
\begin{equation}\label{e62}
\frac{1}{z}\left(f(z)*\frac{z- k z^{2}}{(1 - z)^{2}}\right) \neq 0
 \end{equation}
 	where $k=\cosh  e^{it/2}/(\cosh  e^{it/2} - 1)$ for $t\in [-\pi,\pi].	$
Moreover, $f\in\mathcal{S}^{*}_{\varrho}$ if and only if 
\begin{equation}\label{t5}
1-\sum_{n=2}^{\infty}\frac{(n - \cosh  e^{it/2})a_{n}}{ \cosh  e^{it/2} - 1 } z^{n-1} \neq 0.
\end{equation}	
\end{theorem}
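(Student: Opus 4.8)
The plan is to derive \eqref{e62} from the standard convolution (Hadamard product) characterisation of subordination to a convex univalent function, and then to rewrite that condition as the coefficient statement \eqref{t5}. Recall that for a convex univalent $\varphi$ with $\varphi(0)=1$, a normalised $p$ with $p(0)=1$ satisfies $p(z)\prec\varphi(z)$ if and only if $p(z)\ne\varphi(e^{it})$ for all $z\in\mathbb{D}$ and all $t\in[-\pi,\pi]$ (equivalently, the disc $p(\mathbb{D})$ avoids the boundary curve $\partial\varphi(\mathbb{D})$). Since $\varrho(z)=\cosh\sqrt z$ is convex univalent by the remarks following the definition, and $zf'(z)/f(z)$ has the required normalisation whenever $f\in\mathcal A$ with $f(z)/z\ne 0$, we may apply this with $p(z)=zf'(z)/f(z)$: thus $f\in\mathcal S^{*}_{\varrho}$ iff
\[
\frac{zf'(z)}{f(z)}\ne \cosh e^{it/2}=\varrho(e^{it})\qquad(z\in\mathbb{D},\ t\in[-\pi,\pi]).
\]

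Next I would convert the pointwise inequality on $zf'(z)/f(z)$ into a convolution non-vanishing condition. Write $\kappa_t:=\cosh e^{it/2}$ for brevity (this is the $k$ of the statement, up to the factor below). Clearing denominators, the condition $zf'(z)/f(z)\ne\kappa_t$ is equivalent to $zf'(z)-\kappa_t f(z)\ne 0$ for $z\ne 0$, i.e. to $\tfrac1z\big(zf'(z)-\kappa_t f(z)\big)\ne 0$ in $\mathbb{D}$. Now use the well-known convolution identities $f(z)*\tfrac{z}{(1-z)^2}=zf'(z)$ and $f(z)*\tfrac{z}{1-z}=f(z)$; hence
\[
zf'(z)-\kappa_t f(z)=f(z)*\left(\frac{z}{(1-z)^2}-\kappa_t\,\frac{z}{1-z}\right)
=f(z)*\frac{z-(\kappa_t-1)z^2}{(1-z)^2}.
\]
Dividing through by $\kappa_t-1$ (which is nonzero since $\kappa_t=\cosh e^{it/2}\ne 1$ for real $t$, as $e^{it/2}\ne 0$), and setting $k=\kappa_t/(\kappa_t-1)$ so that $\kappa_t-1=1/(k-1)$ and $(\kappa_t-1)z^2=kz^2$ after rescaling — one checks that $\frac{z-(\kappa_t-1)z^2}{(1-z)^2}$ is, up to the nonzero scalar $\kappa_t-1$, equal to $\frac{z-kz^2}{(1-z)^2}$ with $k=\kappa_t/(\kappa_t-1)$. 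This yields exactly \eqref{e62}.

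Finally, for \eqref{t5}, I would expand the convolution in \eqref{e62} coefficientwise. Writing $f(z)=z+\sum_{n\ge 2}a_n z^n$ and using $\tfrac{z-kz^2}{(1-z)^2}=z+\sum_{n\ge2}\big(n-(n-1)k\big)z^n$, the product $f(z)*\tfrac{z-kz^2}{(1-z)^2}$ has $n$-th coefficient $\big(n-(n-1)k\big)a_n$ (with $a_1=1$). Dividing by $z$ and then normalising the constant term to $1$ — i.e. dividing by the coefficient of $z$, which is $1-0\cdot k=1$ — and substituting $k=\kappa_t/(\kappa_t-1)$ so that $n-(n-1)k=\tfrac{n(\kappa_t-1)-(n-1)\kappa_t}{\kappa_t-1}=\tfrac{-(n-\kappa_t)}{\kappa_t-1}$, one arrives at
\[
1-\sum_{n=2}^{\infty}\frac{(n-\cosh e^{it/2})\,a_n}{\cosh e^{it/2}-1}\,z^{n-1}\ne 0,
\]
which is \eqref{t5}. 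I expect the main obstacle to be bookkeeping rather than conceptual: carefully tracking the nonzero scalar factor $\kappa_t-1$ through the clearing of denominators and the rescaling of $\tfrac{z-(\kappa_t-1)z^2}{(1-z)^2}$ to $\tfrac{z-kz^2}{(1-z)^2}$, and confirming that the normalisation producing the leading $1$ in \eqref{t5} is legitimate (the coefficient of $z$ after the convolution is exactly $1$, so no spurious factor appears). One should also note explicitly that the reduction requires $f(z)/z\ne 0$ in $\mathbb{D}$, which is automatic for $f\in\mathcal S^{*}_{\varrho}$ and, in the converse direction, follows from \eqref{e62} taken at the limiting parameter values, so the equivalence is genuinely two-sided.
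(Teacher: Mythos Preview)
Your approach is essentially the paper's: reduce $f\in\mathcal S^{*}_{\varrho}$ to $zf'(z)/f(z)\ne\cosh e^{it/2}$, clear denominators, and rewrite via the convolution identities $f*\tfrac{z}{(1-z)^2}=zf'$ and $f*\tfrac{z}{1-z}=f$ to obtain \eqref{e62}, then expand coefficientwise for \eqref{t5}. One algebraic slip to fix in your write-up: the identity
\[
\frac{z}{(1-z)^2}-\kappa_t\,\frac{z}{1-z}=\frac{(1-\kappa_t)z+\kappa_t z^{2}}{(1-z)^{2}},
\]
not $\dfrac{z-(\kappa_t-1)z^{2}}{(1-z)^{2}}$; dividing this correct expression by the nonzero scalar $1-\kappa_t$ (equivalently, passing as the paper does to $zf'(z)-k\bigl(zf'(z)-f(z)\bigr)$ with $k=\kappa_t/(\kappa_t-1)$) then yields $\dfrac{z-kz^{2}}{(1-z)^{2}}$ and the rest of your argument goes through unchanged.
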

\begin{proof}
Since $f\in\mathcal{S}^{*}_{\varrho},$  then $zf'(z)/f(z) = \cosh  \sqrt{w(z)},$ where $w(z)$ is a Schwarz function with $w(0)=0.$ Equivalently for $w(z)=e^{it},$ $-\pi \leq t \leq \pi,$ we have
 \[\frac{zf'(z)}{f(z)} \neq \cosh  e^{it/2} \Leftrightarrow zf'(z) - (\cosh  e^{it/2})f(z) \neq 0 \quad \text{for}  \quad t\in[-\pi,\pi],\]
Eventually it leads to $zf'(z) - k (zf'(z)-f(z)) \neq 0.$ Thus through simple computations \eqref{e62} can be established. The condition in \eqref{t5} can be deduced using \eqref{e62}. 
\end{proof}

\begin{corollary}
Let $f\in\mathcal{A}$ satisfy the following 
\begin{equation}\label{e63}
\sum_{n=2}^{\infty} \left|\frac{n- \cosh  e^{it/2}}{\cosh  e^{it/2} - 1}\right||a_{n}| < 1, 
	\end{equation}
then $f\in\mathcal{S}^{*}_{\varrho}.$
\end{corollary}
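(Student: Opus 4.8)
The plan is to obtain this as an immediate consequence of the second nonvanishing characterization in Theorem~\ref{t4}, namely condition \eqref{t5}, via a crude term-by-term estimate; this is the standard ``Silverman-type'' route for turning a functional characterization into a coefficient sufficient condition.

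First I would record a short preliminary remark: the denominator $\cosh e^{it/2}-1$ appearing in \eqref{e63} and \eqref{t5} never vanishes for $t\in[-\pi,\pi]$, since $\cosh w=1$ forces $w\in 2\pi i\,\mathbb{Z}$, which is impossible for $w=e^{it/2}$ because $|e^{it/2}|=1$. Hence every quotient written below is well defined and \eqref{e63} is a genuine (finite) restriction on the coefficients $a_n$; in particular the hypothesis already guarantees absolute convergence of the series in \eqref{t5}.

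Next I would fix $z\in\mathbb{D}$ and $t\in[-\pi,\pi]$, apply the triangle inequality to the series in \eqref{t5}, and use $|z|^{n-1}<1$ for $n\ge 2$:
\begin{align*}
\left|\sum_{n=2}^{\infty}\frac{(n-\cosh e^{it/2})a_n}{\cosh e^{it/2}-1}\,z^{n-1}\right|
&\le \sum_{n=2}^{\infty}\left|\frac{n-\cosh e^{it/2}}{\cosh e^{it/2}-1}\right||a_n|\,|z|^{n-1}\\
&< \sum_{n=2}^{\infty}\left|\frac{n-\cosh e^{it/2}}{\cosh e^{it/2}-1}\right||a_n|<1,
\end{align*}
the last step being precisely \eqref{e63}. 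Therefore
\[
1-\sum_{n=2}^{\infty}\frac{(n-\cosh e^{it/2})a_n}{\cosh e^{it/2}-1}\,z^{n-1}\neq 0
\]
for every $z\in\mathbb{D}$ and every $t\in[-\pi,\pi]$, which is exactly condition \eqref{t5}. By Theorem~\ref{t4} this yields $f\in\mathcal{S}^{*}_{\varrho}$.

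There is essentially no obstacle here, since all the analytic content sits in Theorem~\ref{t4}; the corollary is purely the routine consequence obtained by bounding the power series by its coefficient sum. The only points needing a sentence of care are the nonvanishing of the denominator (handled in the preliminary remark) and the implicit convention that \eqref{e63} is assumed to hold for every $t\in[-\pi,\pi]$, so that the conclusion \eqref{t5} is likewise obtained for every such $t$.
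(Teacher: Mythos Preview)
Your proof is correct and follows essentially the same route as the paper: both apply the triangle inequality to the series in \eqref{t5} (the paper writes the summand as $n(k-1)-k$ with $k=\cosh e^{it/2}/(\cosh e^{it/2}-1)$, which simplifies to $(n-\cosh e^{it/2})/(\cosh e^{it/2}-1)$), obtain a lower bound strictly positive by \eqref{e63}, and invoke Theorem~\ref{t4}. Your added remarks on the nonvanishing of the denominator and the use of $|z|^{n-1}<1$ simply make explicit what the paper leaves implicit.
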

\begin{proof}
	 Consider the following inequality with  $k=\cosh  e^{it/2}/(\cosh  e^{it/2}-1),$
	\[\left|1-\sum_{n=2}^{\infty} (n(k-1)-k)a_{n}z^{n-1}\right| \geq 1 - \sum_{n=2}^{\infty}|n (k-1) -k||a_{n}|.\]
	Thus from \eqref{e63} we establish  \[\left|1-\sum_{n=2}^{\infty} (n(k-1)-k)a_{n}z^{n-1}\right|> 0,\]
	Hence due to Theorem \ref{t4} we conclude that $f\in\mathcal{S}^{*}_{\varrho}.$
\end{proof}

\begin{theorem}
Let $f\in\mathcal{S}^{*}_{\varrho}$  then the following inequality holds
\begin{equation*}
c_{1}^{2} -1 \geq \sum_{k=2}^{\infty}(k^{2}-{c_{1}}^{2})|{a_{k}}|^{2}.
\end{equation*}
\end{theorem}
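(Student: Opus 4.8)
The statement is an area-type (Prawitz-type) inequality for functions $f\in\mathcal S^{*}_{\varrho}$, and I would prove it by combining the subordination $zf'(z)/f(z)\prec\varrho(z)=\cosh\sqrt z$ with the sharp bound $|\varrho(z)|<c_{1}=\cosh 1$ on $\mathbb D$ recorded in Lemma~\ref{t2-10}(iii). First I would write $f(z)=z+\sum_{k\ge2}a_kz^k\in\mathcal S^{*}_{\varrho}$; by definition $zf'(z)=f(z)\,\varrho(w(z))$ for some Schwarz function $w$, hence for every $z\in\mathbb D$,
\[
|zf'(z)|=|f(z)|\,|\varrho(w(z))|\le c_{1}\,|f(z)|,
\]
because $|w(z)|<1$ forces $\varrho(w(z))\in\varrho(\mathbb D)$ and $|\varrho(u)|<c_1$ there. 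Thus $|zf'(z)|^2\le c_1^2|f(z)|^2$ throughout $\mathbb D$.

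**Integrating over a circle.** The next step is to pass from the pointwise inequality to the coefficients via Parseval. For $|z|=r<1$,
\[
\frac{1}{2\pi}\int_{0}^{2\pi}|zf'(z)|^{2}\,d\theta=\sum_{k=1}^{\infty}k^{2}|a_{k}|^{2}r^{2k},\qquad
\frac{1}{2\pi}\int_{0}^{2\pi}|f(z)|^{2}\,d\theta=\sum_{k=1}^{\infty}|a_{k}|^{2}r^{2k},
\]
with $a_1=1$. Integrating the inequality $|zf'(z)|^2\le c_1^2|f(z)|^2$ gives
\[
\sum_{k=1}^{\infty}k^{2}|a_{k}|^{2}r^{2k}\le c_1^{2}\sum_{k=1}^{\infty}|a_{k}|^{2}r^{2k},
\]
i.e. $\sum_{k\ge2}(k^{2}-c_1^{2})|a_k|^2 r^{2k}\le (c_1^{2}-1)r^{2}$. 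Dividing by $r^{2}$ and letting $r\to1^{-}$ (the left side is a series of eventually nonnegative terms, so monotone/Fatou-type convergence applies to each tail while the finitely many negative terms — only $k=2$, since $c_1\approx1.543<2$ — converge trivially) yields
\[
\sum_{k=2}^{\infty}(k^{2}-c_1^{2})|a_k|^{2}\le c_1^{2}-1,
\]
which is the claimed inequality after rearrangement.

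**Expected obstacle.** The only delicate point is justifying the limit $r\to1^{-}$: the sum $\sum_{k\ge2}(k^2-c_1^2)|a_k|^2 r^{2k}$ has exactly one negative term ($k=2$), all others positive, so one splits off $k=2$, applies monotone convergence to the positive tail $\sum_{k\ge3}(k^2-c_1^2)|a_k|^2 r^{2k}$, and uses continuity for the single $k=2$ term; this shows the limit exists (possibly $+\infty$ a priori) and is bounded by $c_1^2-1$, which in particular forces convergence. I would also note that the underlying pointwise estimate $|\varrho(u)|<c_1$ on $\varrho(\mathbb D)$ is exactly Lemma~\ref{t2-10}(iii), so no new geometric work is needed; everything else is the standard area-theorem computation. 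An alternative, essentially equivalent route is to observe $\mathcal S^{*}_{\varrho}\subset\mathcal M(c_1)$ by Corollary~\ref{t12}(ii) and quote the known area inequality for $\mathcal M(\beta)$, but the direct Parseval argument above is self-contained and cleaner.
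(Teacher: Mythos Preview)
Your proof is correct and follows essentially the same Parseval/area-theorem approach as the paper (the paper bounds $|\varrho(w(z))|$ by the $r$-dependent quantity $\cosh\sqrt{|w|}\le\cosh\sqrt r$ before letting $r\to1^{-}$, while you go straight to the global bound $c_1$, but the substance is identical). One minor slip: since $c_1^{2}=\cosh^{2}1\approx 2.38<4$, the $k=2$ term already satisfies $k^{2}-c_1^{2}>0$, so there are no negative terms in the sum and the limit justification is even simpler than you describe.
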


\begin{proof}
Since $f\in\mathcal{S}^{*}_{\varrho},$ then $zf'(z)=\cosh(  \sqrt{w(z)})f(z),$ for a Schwarz function $w(z)$ with $w(0)=0.$ For $0\leq |z|=r <1,$ we get the following 
\begin{align}
2 \pi \sum_{k=1}^{\infty} k^{2}|a_{k}|^{2}r^{2k}&=\int_{0}^{2 \pi}\left|r e^{i\theta}f'(r e^{i \theta})\right|^{2}d\theta \nonumber \\&
=\int_{0}^{2\pi}\left|\cosh\left(\sqrt{w(r e^{i \theta})}\right)f(r e^{i \theta})\right|^{2} d\theta\nonumber \\&
\leq \int_{0}^{2\pi}\cosh^{2}\left(\sqrt{|w(re^{i\theta})|}\right)|f(r e^{i \theta})|^{2} d\theta\nonumber \\&
\leq \int_{0}^{2\pi} (\cosh ^{2} r) |f(r e^{i \theta})|^{2} d \theta \nonumber \\ &
= 2 \pi (\cosh^{2} r) \sum_{k=1}^{\infty}|a_{k}|^{2}r^{2k}.
\end{align}	
Thus when $r$ tends to $1^{-},$ we at once obtain the required inequality.
\end{proof}

\begin{example}
Let $f\in\mathcal{A},$ then following functions are members of $\mathcal{S}^{*}_{\varrho}.$
\begin{enumerate}[(i)]
\item $f(z)=z+a_{n}z^{n} \in \mathcal{S}^{*}_{\varrho},$ provided $|a_{n}|\leq (1 - c_{0})/(n-c_{0}),$ $n\in\mathbb{N}-\{1\}.$  
\item $f(z)=z/(1 - A z)^{2}\in \mathcal{S}^{*}_{\varrho},$ provided $|A|\leq (c_{1} - 1 )/(c_{1} + 1).$
\item $f(z)=z/(1 - A z)\in \mathcal{S}^{*}_{\varrho},$ provided $|A|\leq (c_{1} - 1)/c_{1}.$
\item   $f(z)=z e^{A z}\in \mathcal{S}^{*}_{\varrho},$ provided $|A|\leq 1 - c_{0}.$
\end{enumerate}
\end{example}

\begin{proof}
%To establish this result, we use Theorem \ref{t14}.
For part (i) we require that $zf'(z)/f(z)=(1+n a_{n} z^{n-1})/(1 + a_{n} z^{n-1})$ must lie in the disc $\left\{u:|u-c|<r_{c}\right\}\subset\varrho(\mathbb{D}),$ centered at $c,$ where $r_{c}$ is defined in \eqref{r1}. It is a known fact that the function $f(z)=z+a_{n}z^{n}$ is univalent if and only if $|a_{n}|\leq 1/n.$ Thus $c=(1-n |a_{n}|^{2})/(1 - |a_{n}|^{2})\leq 1.$ If $u=(1+n a_{n} z^{n-1})/(1 + a_{n} z^{n-1})$  and $r_{c}=(1-n|a_{n}|^{2})/(1-|a_{n}|^{2})-c_{0},$ then due to Theorem \ref{t14}, 
\[\frac{(n-1)|a_{n}|}{1 - |a_{n}|^{2}}\leq \frac{1-n |a_{n}|^{2}}{1 - |a_{n}|^{2}} - c_{0}.\] The proofs of (ii)-(iv) are much akin to (i), therefore it is skipped. % Equivalently we have $|a_{n}|\leq (1-c_{0})/(n-c_{0}).$
%Let $zf'(z)/f(z)=(1+A z)/(1 - A z).$ We know that the extremities of  diameter of the disc \[\left|\frac{1+Az}{1-Az}-\frac{1+|A|^{2}}{1-|A|^{2}}\right| \leq \frac{2 |A|}{1 - |A|^{2}}\] are $(1-|A|)/(1+|A|)$ and $(1+|A|)/(1-|A|).$ Therefore due to Theorem \ref{t14}, $u=(1+A z)/(1-A z)$ lies in the region $\left\{u:|u-c|<r_{c}\right\}\subset\varrho(\mathbb{D}),$ provided  $(1+|A|)/(1-|A|)\leq c_{1}.$ Equivalently  $|A|\leq (c_{1}-1)/(c_{1}+1).$
 %As $zf'(z)/f(z)=1/(1-Az)$ maps $\mathbb{D}$ onto the disc
%\[\left|\frac{1}{1-A z}-\frac{1}{1-|A|^{2}}\right|\leq \frac{|A|}{1-%|A|^{2}}.\] Now proceeding as in part (ii), we get the desired bound on $|A|.$
 %We know that $zf'(z)/f(z)=1+Az$ maps $\mathbb{D}$ onto the disc $\left|u-1\right|<|A|.$ Then due to Theorem \ref{t14}, $u=1+A z$ lies in $\left\{u:|u-c|<r_{c}\right\}\subset \varrho(\mathbb{D}),$ provided $|A|\leq 1-c_{0}.$
\end{proof}

\end{document}